\def\BBox{\kern -0.2cm\hbox{\vrule width 0.2cm height 0.2cm}}
\newtheorem{theorem}{Theorem}
\newtheorem{lemma}[theorem]{Lemma}
\newcommand{\sst}[1]{${\scriptstyle #1}$}
\newcommand{\po}{\mathcal{P}}
\title{Symmetric Graphicahedra}
\author{
Maria Del R\'io-Francos\thanks{mad210fcos@gmail.com},
Isabel Hubard\thanks{Partially supported by Sociedad Matem\'atica Mexicana-Fundaci—n Sof'a Kovalevskaia and PAPIIT-M\'exico IB101412,  hubard@matem.unam.mx}, 
Deborah Oliveros\thanks{PAPIIT-M\'exico under project 104609, dolivero@matem.unam.mx}\\
{\small  Instituto de Matem\'{a}ticas}\\
{\small  Universidad Nacional Aut\'{o}noma de M\'{e}xico, M\'{e}xico}
\\[1ex]
Egon Schulte\thanks{Supported by NSF-Grant DMS--0856675, schulte@neu.edu}
\thanks{ PAPIIT-M\'exico under project 104609.}\\
{\small Department of Mathematics}\\
{\small Northeastern University, Boston, USA}\\ }
\begin{document}
\maketitle

\begin{abstract}
Given a connected graph $G$ with $p$ vertices and $q$ edges, the $G$-graphicahedron is a vertex-transitive simple abstract polytope of rank $q$ whose edge-graph is isomorphic to a Cayley graph of the symmetric group $S_p$ associated with $G$. The paper explores combinatorial symmetry properties of $G$-graphicahedra, focussing in particular on transitivity properties of their automorphism groups. We present a detailed analysis of the graphicahedra for the $q$-star graphs $K_{1,q}$ and the $q$-cycles $C_q$. The $C_q$-graphicahedron is intimately related to the geometry of the infinite Euclidean Coxeter group $\tilde{A}_{q-1}$ and can be viewed as an edge-transitive tessellation of the $(q-1)$-torus by $(q-1)$-dimensional permutahedra, obtained as a quotient, modulo the root lattice $A_{q-1}$, of the Voronoi tiling for the dual root lattice $A_{q-1}^*$ in Euclidean $(q-1)$-space.
\end{abstract}

\section{Introduction}

In the present paper, we continue the investigation of \cite{graphi} of an interesting class of abstract polytopes, called {\em graphicahedra\/}, which are associated with graphs. Any finite connected graph $G$ determines a $G$-graphicahedron. When $G$ has $p$ vertices and $q$ edges, this is a vertex-transitive simple abstract polytope of rank $q$ whose edge-graph ($1$-skeleton) is isomorphic to the Cayley graph of the symmetric group $S_p$ associated with a generating set of $S_p$ derived from $G$. The graphicahedron of a graph is a generalization of the well-known permutahedron, which is obtained when $G$ is a path. The study of the permutahedron has a rich history (see \cite{fom,post,Z95}), apparently beginning with Schoute~\cite{S11} in 1911; it was rediscovered in Guilbaud \& Rosenstiehl~\cite{guro} and named ``permuto\'{e}dre" (in French). 

In this paper, we explore combinatorial symmetry properties of graphicahedra, in particular focussing on graphicahedra with automorphism groups acting transitively on the faces of some given positive rank. We present a detailed analysis of the structure of the graphicahedra for the $q$-star graphs $K_{1,q}$ and the $q$-cycles $C_q$. 

The $K_{1,q}$-graphicahedon, with $q\geq 3$, is a regular polytope of rank $q$ with automorphism group $S_{q+1}\times S_q$, all of whose faces of a given rank $j$ are isomorphic to $K_{1,j}$-graphicahedra; in fact, the $K_{1,q}$-graphicahedron is the universal regular polytope with facets isomorphic to $K_{1,q-1}$-graphicahedra and vertex-figures isomorphic to $(q-1)$-simplices. The structure of the $C_q$-graphicahedron displays a beautiful connection with the geometry of the infinite Euclidean Coxeter group $\tilde{A}_{q-1}$ (see \cite[Ch. 3B]{arp}), whose Coxeter diagram is the $q$-cycle $C_q$. The $C_q$-graphicahedron, with $q\geq 3$, can be viewed as an edge-transitive tessellation of the $(q-1)$-torus by $(q-1)$-dimensional permutahedra, obtained as a quotient, modulo the root lattice $A_{q-1}$, of the Voronoi tiling for the dual root lattice $A_{q-1}^*$, in Euclidean $(q-1)$-space.

The paper is organized as follows. In Section~\ref{dron}, we review basic definitions for polytopes and graphicahedra, and describe face-transitivity properties of $G$-graphicahedra in terms of subgraph transitivity properties of the underlying graph~$G$. Then Sections~\ref{k1qgraphi} and~\ref{cqgraphi} deal with the graphicahedra for the $q$-star $K_{1,q}$ and the $q$-cycle $C_q$.

\section{Face-transitive graphicahedra}
\label{dron}

An {\it abstract polytope} of rank $n$, or an {\it $n$-polytope}, is a partially ordered set $\po$ equipped with a strictly monotone rank function with range $\{-1,0,\ldots,n\}$ (see McMullen \& Schulte~\cite[Chs. 2A,B]{arp}). The elements of ${\cal P}$ are called {\em faces\/}, or $j$-{\it faces} if their rank is $j$. Faces of ranks 0, 1 or $n-1$ are also called {\it vertices}, {\it edges} or {\it facets} of $\cal P$, respectively. Moreover, ${\cal P}$ has a smallest face (of rank $-1$) and largest face (of rank $n$), denoted by $F_{-1}$ and $F_n$, respectively; these are the {\it improper faces} of ${\cal P}$. Each {\it flag} (maximal totally ordered subset) $\Phi$ of ${\cal P}$ contains exactly $n+2$ faces, one of each rank $j$; this $j$-face is denoted by $(\Phi)_j$. Two flags are said to be {\it adjacent} if they differ in just one face; they are {\em $j$-adjacent\/} if this face has rank $j$. In $\cal P$, any two flags $\Phi$ and $\Psi$ can be joined by a sequence of flags $\Phi=\Phi_0,\Phi_1,...,\Phi_k=\Psi$, all containing $\Phi \cap \Psi$, such that any two successive flags $\Phi_{i-1}$ and $\Phi_i$ are adjacent; this property is known as the {\it strong flag-connectedness} of ${\cal P}$. Finally, $\cal P$ has the following {\it homogeneity  property}, usually referred to as the {\it diamond condition}:\  whenever $F\leq F'$, with ${\rm rank}(F)=j-1$ and ${\rm rank}(F')=j+1$, there are exactly two faces $F''$ of rank $j$ such that $F\leq F''\leq F'$. Then, for each flag $\Phi$ of $\po$ and each $j$ with $j=0, \dots, n-1$, there exists a unique $j$-adjacent flag of $\Phi$, denoted $\Phi^j$. 

If $F$ and $F'$ are faces of ${\mathcal P}$ with $F \leq F'$, then $F'/F:=\{F'' \in \po \mid F \leq F'' \leq F'\}$ is a polytope of rank ${\rm rank}(F')-{\rm rank}(F)-1$ called a {\em section\/} of $\po$. If $F$ is a vertex of $\po$, then $F_n/F$ is the {\em vertex-figure\/} of $\po$ at $F$. A {\em chain\/} (totally ordered subset) of $\po$ is said to be of {\em type\/} $J$, with $J \subseteq \{0, \dots, n-1\}$, if $J$ is the set of ranks of proper faces contained in it. Flags are maximal chains and are of type $\{0,\ldots,n-1\}$.

Following standard terminology for convex polytopes (see \cite{gru}), an abstract $n$-polytope is called {\em simple\/} if the vertex-figure at each vertex is isomorphic to an $(n-1)$-simplex. (Note here that the terms ``simple" in polytope theory and graph theory refer to different properties. An abstract polytope with an edge graph that is a ``simple graph" is usually not a simple polytope. However, a simple polytope has a ``simple" edge graph.)

A polytope $\po$ is {\em regular} if its automorphism group $\Gamma({\cal P})$ is transitive on the flags of ${\cal P}$. We say that ${\cal P}$ is {\em $j$-face transitive\/} if $\Gamma({\cal P})$ acts transitively on the $j$-faces of ${\cal P}$. Every regular polytope is $j$-face transitive for each $j$; however, a polytope that is $j$-face transitive for each $j$ need not be regular. Moreover, for $J \subseteq \{0, \dots, n-1\}$ we say that $\po$ is $J$-transitive if $\Gamma(\po)$ is transitive on the chains of $\po$ of type $J$. Note that, if $\po$ is $J$-transitive, then $\po$ is $j$-face transitive for each $j \in J$; however the converse is not true in general.

We now briefly review the construction of the {\em graphicahedron} ${\cal P}_G$, an abstract polytope associated with a finite graph $G$ (see \cite{graphi} for details). 

By a $(p,q)$-graph we mean a finite simple graph (without loops or multiple edges) that has $p$ vertices and $q$ edges.
Let $G$ be a $(p,q)$-graph with vertex set $V(G):=\{1,\ldots,p\}$ and edge set $E(G)=\{e_1,\ldots,e_q\}$, where $p\geq 1$ and $q\geq 0$ (if $q=0$, then $E(G)=\emptyset$). If $e = \{i,j\}$ is an edge of $G$ (with vertices $i$ and $j$), define the transposition $\tau_e$ in the symmetric group $S_p$ by ${\tau}_{e}:= (i\;j)$. Let $T_G:=\langle\tau_{e_1},\ldots,\tau_{e_q}\rangle$; this is the subgroup of $S_p$ generated by the transpositions determined by the edges of $G$. If $G$ is connected, then $T_{G}=S_p$. More generally, if $K\subseteq E(G)$ we let $T_K:=\langle\tau_{e}\mid e\in K\rangle$; this is the trivial group if $K=\emptyset$.

In defining $\mathcal{P}_G$ we initially concentrate on the faces of ranks $0,\ldots,q$ and simply append a face of rank $-1$ at the end. For $i\in I:=\{0,\ldots,q\}$ define 
\begin{equation}
\label{facets}
C_i :=  \{ (K,\alpha) \mid K\subseteq E(G), |K| = i,\, \alpha\in S_p \} .
\end{equation} 
We say that two elements $(K,\alpha)$ and $(L,\beta)$ of ${\bigcup _{i\in I} C_i}$ are {\em equivalent\/}, or $(K,\alpha)\sim(L,\beta)$ for short, if and only if $K=L$ and $T_{K}\alpha = T_{L}\beta$ (equality as right cosets in $S_p$). This defines an equivalence relation $\sim$ on ${\bigcup _{i\in I} C_i}$. We use the same symbol for both, an element of ${\bigcup _{i\in I} C_i}$ and its equivalence class under $\sim$. Then, as a set (of faces, of rank $\geq 0$), the graphicahedron $\mathcal{P}_G$ is just the set of equivalence classes
\[ {\big(\bigcup _{i\in I} C_i\big)}\slash{\sim} \] 
for $\sim$. The partial order $\leq$ on ${\cal P}_G$ is defined as follows: when $(K,\alpha), (L,\beta) \in {\cal P}_G$ then $(K,\alpha) \leq (L,\beta)$ if and only if $K\subseteq L$ and $T_{K}\alpha \subseteq T_{L}\beta$. This gives a partially ordered set with a rank function given by $\mathrm{rank}(K,\alpha) = |K|$. In \cite[Section 4]{graphi} it is shown that ${\cal P}_G$, with a $(-1)$-face appended, is an abstract q-polytope. 

Note that, if $(K,\alpha)$ and $(L,\beta)$ are two faces of ${\cal P}_G$ with $(K,\alpha) \leq (L,\beta)$, then $(L,\beta)=(L,\alpha)$ in ${\cal P}_G$, so in representing the larger face we may replace its second component $\beta$ by the second component $\alpha$ of the smaller face. This implies that a flag $\Phi$ of ${\cal P}_G$ can be described by two parameters, namely a maximal nested family of subsets of $E(G)$, denoted by ${\cal K}_{\Phi }:={\cal K}:=\{K_0,K_1,\ldots,K_q\}$, and a single element $\alpha\in S_p$ given by the second component of its vertex; that is, 
\[ \Phi = ({\cal K},\alpha) := \{(K_0,\alpha),(K_1,\alpha),\ldots,(K_q,\alpha)\} . \]
By a {\em maximal nested\/} family of subsets of a given finite set we mean a flag in the Boolean lattice (power set) associated with this set. (Each $K_i$ contains exactly $i$ edges, so $K_{0}=\emptyset$ and $K_{q}=E(G)$.) 

We next investigate face transitivity properties of the automorphism group $\Gamma(\po_G)$ of the graphicahedron. We know from \cite{graphi} that, for $q\neq 1$,
\[\Gamma(\po_G)=S_{p}\ltimes\Gamma(G)\]
the semidirect product of $S_p$ with the graph automorphism group $\Gamma(G)$ of $G$. In particular, if $\gamma \in S_p$, $\kappa \in \Gamma(G)$, and $(K, \alpha)$ is a face of $\po_G$, then the actions of $\gamma$ and $\kappa$ on $\po_G$ are given by 
\begin{eqnarray}
(K, \alpha) & \xrightarrow{\gamma}& (K, \alpha\gamma^{-1}), \label{s_p-action} \\
(K, \alpha) &\xrightarrow{\kappa} &  (\kappa(K), \alpha^\kappa),  \label{graph-action}
\end{eqnarray} 
with $\alpha^\kappa := \kappa\alpha\kappa^{-1}$. Note here that every graph automorphism $\kappa$ of $G$ is an incidence-preserving permutation of the $p$ vertices and $q$ edges of $G$ and hence determines two natural mappings:\  first, $\kappa$ acts on the subsets of $E(G)$ via its action on the edges of $G$; and second, $\kappa$ induces a group automorphism of $S_p$ via its action on the vertices of $G$, namely through conjugation in $S_p$ by $\kappa$. In (\ref{graph-action}), the actions of $\kappa$ on both the edges and the vertices of $G$ are employed. The subgroup $S_p$ of $\Gamma(\po_G)$ acts simply transitively on the vertices of $\po_G$. The stabilizer of the vertex $(\emptyset,\epsilon)$ of $\po_G$ in $\Gamma(\po_G)$ is the subgroup $\Gamma(G)$.

\begin{lemma}
\label{0,j-trans}
When $j\geq 1$ the graphicahedron $\po_G$ is $j$-face transitive if and only if $\po_G$ is $\{0,j\}$-transitive.
\end{lemma}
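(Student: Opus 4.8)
The plan is to prove the two implications separately. One direction is immediate and was already observed in Section~\ref{dron}: if $\po_G$ is $\{0,j\}$-transitive, then a fortiori it is $j$-face transitive. So the whole content of the lemma is the converse, and I would concentrate on showing that $j$-face transitivity implies $\{0,j\}$-transitivity.

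For the converse I would reduce to a statement about the stabilizer of a single $j$-face. A chain of type $\{0,j\}$ is simply a pair $\{v,F\}$ where $F$ is a $j$-face and $v$ is a vertex with $v\le F$ (the inequality is automatically strict since $j\ge 1$). Given two such chains $\{v_1,F_1\}$ and $\{v_2,F_2\}$, use $j$-face transitivity to choose $g\in\Gamma(\po_G)$ with $g(F_1)=F_2$; then $g(v_1)$ is a vertex with $g(v_1)\le F_2$. Thus it suffices to prove that for every $j$-face $F$ the stabilizer $\Gamma(\po_G)_F$ acts transitively on the set of vertices $v$ with $v\le F$: granting this, one picks $h\in\Gamma(\po_G)_{F_2}$ with $h(g(v_1))=v_2$, and then $hg$ carries the chain $\{v_1,F_1\}$ onto $\{v_2,F_2\}$.

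To prove this transitivity statement I would compute directly inside $\po_G$. Write $F=(K,\alpha)$ with $|K|=j$. By the definition of the partial order on $\po_G$, the vertices $v$ with $v\le F$ are exactly the faces $(\emptyset,\delta\alpha)$ with $\delta\in T_K$. Consider the subgroup $H:=\alpha^{-1}T_K\alpha$ of $S_p$, acting on $\po_G$ as a subgroup of $\Gamma(\po_G)$ via~(\ref{s_p-action}). For $\delta\in T_K$, applying $\alpha^{-1}\delta\alpha$ to $F=(K,\alpha)$ gives $(K,\alpha(\alpha^{-1}\delta\alpha)^{-1})=(K,\delta^{-1}\alpha)=(K,\alpha)$, since $T_K\delta^{-1}\alpha=T_K\alpha$; hence $H\le\Gamma(\po_G)_F$. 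On the other hand, applying $\alpha^{-1}\delta\alpha\in H$ to a vertex $(\emptyset,\delta_0\alpha)$ (with $\delta_0\in T_K$) gives $(\emptyset,\delta_0\delta^{-1}\alpha)$, and as $\delta$ runs over $T_K$ the element $\delta_0\delta^{-1}$ runs over all of $T_K$. Thus $H$ alone already acts transitively on the vertices of $F$, which establishes the stabilizer claim and completes the proof.

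I do not expect a genuine obstacle here: once one notices that the conjugated ``translation'' subgroup $\alpha^{-1}T_K\alpha$ lies in the stabilizer of the $j$-face $(K,\alpha)$ and acts transitively on its vertices, everything reduces to elementary coset bookkeeping with the right action~(\ref{s_p-action}) and the fact that $S_p$ sits inside $\Gamma(\po_G)$ acting in this way. Conceptually the statement merely reflects that the section below a $j$-face of $\po_G$ is again (isomorphic to) a graphicahedron, hence vertex-transitive compatibly with the ambient group; but the explicit computation above bypasses any need to invoke that structural fact, and the hypothesis $j\ge1$ is used only to guarantee that the two faces of a $\{0,j\}$-chain are distinct.
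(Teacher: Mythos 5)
Your proposal is correct and follows essentially the same route as the paper: the paper also first matches the $j$-faces using $j$-face transitivity and then corrects the vertex, and its correcting element $\sigma=\beta^{-1}\tau\beta$ is precisely an element of your conjugated subgroup $\beta^{-1}T_{L}\beta$, which stabilizes the target $j$-face and acts transitively on its vertices via~(\ref{s_p-action}). The only difference is presentational: you isolate the stabilizer-transitivity claim as an intermediate step, while the paper computes the specific correcting element directly.
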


\begin{proof}
Clearly, a $\{0,j\}$-transitive polytope $\po_G$ is also $j$-face transitive. Now suppose $\po_G$ is $j$-face transitive and consider two $\{0,j\}$-chains $\{(\emptyset, \alpha),(K, \alpha)\}$ and $\{(\emptyset, \beta),(L, \beta)\}$ of $\po_G$, where $\alpha, \beta \in S_p$ and $|K| = |L| = j$. Then there exists an element $\rho\in \Gamma(\mathcal{P}_G)$ such that $\rho(K,\alpha)=(L,\beta)$. Writing $\rho= \gamma\kappa$, with $\gamma \in S_p$ and $\kappa \in \Gamma(G)$, we then obtain \[(L,\beta)=\rho(K,\alpha) = \gamma\kappa(K,\alpha)= (\kappa(K),\alpha^{\kappa}\gamma^{-1})\] 
and therefore $\kappa(K)=L$ and $T_{L}\beta = T_{L}\alpha^{\kappa}\gamma^{-1}$. It follows that $\alpha^{\kappa}\gamma^{-1}=\tau\beta$ for some $\tau\in T_L$. Then $\sigma:=\beta^{-1}\tau\beta$ lies in $S_p$ and can be viewed as an automorphism of $\po_G$. In particular,
\[ \sigma(\rho(K,\alpha)) = \sigma(L,\beta) = \sigma(L,\tau\beta)=(L,\tau\beta\sigma^{-1})=(L,\beta)\]
and
\[ \sigma(\rho(\emptyset,\alpha)) = \sigma(\emptyset,\alpha^{\kappa}\gamma^{-1})=\sigma(\emptyset,\tau\beta)
= (\emptyset,\tau\beta\sigma^{-1})=(\emptyset,\beta) .\] 
Hence the automorphism $\sigma\rho$ of $\po_G$ maps $\{(\emptyset, \alpha),(K, \alpha)\}$ to $\{(\emptyset, \beta),(L, \beta)\}$. Thus $\po_G$ is $\{0,j\}$-transitive.
~\end{proof}

A {\em spanning subgraph\/} of a graph $G$ is a subgraph containing all the vertices of $G$. It is completely characterized by its edge set. By a {\em $j$-subgraph\/} of $G$ we mean a spanning subgraph of $G$ with exactly $j$ edges.  If $H$ is a $j$-subgraph of $G$ with edge set $E(H)$ for some $j$, then the $(q-j)$-subgraph $H^c$ with edge set $E(G)\setminus E(H)$ is the {\em complement\/} of $H$ in $G$.

We call a graph $G$ {\em $j$-subgraph transitive\/} if its graph automorphism group $\Gamma(G)$ acts transitively on the $j$-subgraphs of $G$. Thus $G$ is $j$-subgraph transitive if and only if, for every $K, L \subseteq E(G)$ with $|K|=j=|L|$, there exists an automorphism $\kappa$ of $G$ such that $\kappa(K)=L$. Note that $G$ is $j$-subgraph transitive if and only if $G$ is $(q-j)$-subgraph transitive. A $1$-subgraph transitive graph is just an edge transitive graph. Every graph, trivially, is $j$-subgraph transitive for $j=0$ and $j=q$, since the trivial graph (with vertex set $V(G)$) and $G$ itself are the only $j$-subgraphs for $j=0$ or $q$, respectively.  A $j$-subgraph transitive graph need not be $k$-subgraph transitive for $k<j$. 

\begin{lemma} 
\label{1tran}
When $j\geq 1$ the graphicahedron $\mathcal{P}_G$ is $j$-face transitive if and only if $G$ is $j$-subgraph transitive.
\end{lemma}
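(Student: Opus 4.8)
The plan is to work directly from the explicit description of the automorphism group, $\Gamma(\po_G)=S_{p}\ltimes\Gamma(G)$, together with the action formulas (\ref{s_p-action}) and (\ref{graph-action}). We may assume $q\geq 2$, so that this description is available; the cases $q=0$ (where there is no $j$ with $1\leq j\leq q$) and $q=1$ (where $j=q$ and $G$ connected forces $G=K_2$, making both conditions hold trivially) are immediate. The essential observation is that the composite action of an automorphism $\rho=\gamma\kappa$ (with $\gamma\in S_p$, $\kappa\in\Gamma(G)$) on a $j$-face is $(K,\alpha)\xrightarrow{\gamma\kappa}(\kappa(K),\alpha^{\kappa}\gamma^{-1})$: the first component is moved only by the graph automorphism $\kappa$, while the $S_p$-part $\gamma$ merely relabels the coset in the second component. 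Thus the "$j$-subgraph" data and the "coset" data of a $j$-face decouple, and face transitivity of $\po_G$ at rank $j$ should be governed precisely by how $\Gamma(G)$ permutes the $j$-subgraphs of $G$.

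For the "only if" direction, assume $\po_G$ is $j$-face transitive and let $K,L\subseteq E(G)$ with $|K|=|L|=j$. Then $(K,\epsilon)$ and $(L,\epsilon)$ are $j$-faces of $\po_G$, so some automorphism $\rho=\gamma\kappa$ carries $(K,\epsilon)$ to $(L,\epsilon)$; by the composite action formula, $\rho(K,\epsilon)=(\kappa(K),\epsilon^{\kappa}\gamma^{-1})$, and comparing first components with $(L,\epsilon)$ gives $\kappa(K)=L$. Hence $\Gamma(G)$ is transitive on the $j$-subgraphs of $G$, i.e., $G$ is $j$-subgraph transitive. (Equivalently, one may invoke Lemma~\ref{0,j-trans}, apply $\{0,j\}$-transitivity to the chains $\{(\emptyset,\epsilon),(K,\epsilon)\}$ and $\{(\emptyset,\epsilon),(L,\epsilon)\}$, note that the vanishing of $T_{\emptyset}$ forces $\gamma=\epsilon$, and read off $\kappa(K)=L$ from the rank-$j$ face.)

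For the "if" direction, assume $G$ is $j$-subgraph transitive and let $(K,\alpha)$ and $(L,\beta)$ be arbitrary $j$-faces of $\po_G$. Pick $\kappa\in\Gamma(G)$ with $\kappa(K)=L$; then $\kappa$ sends $(K,\alpha)$ to $(L,\alpha^{\kappa})$. Now pick $\gamma\in S_p$ with $\gamma^{-1}=(\alpha^{\kappa})^{-1}\beta$, so that $\gamma$ sends $(L,\alpha^{\kappa})$ to $(L,\alpha^{\kappa}\gamma^{-1})=(L,\beta)$. Then $\rho=\gamma\kappa\in\Gamma(\po_G)$ sends $(K,\alpha)$ to $(L,\beta)$, so $\po_G$ is $j$-face transitive. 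I do not expect any genuine obstacle; the only points requiring care are the order of composition in $\gamma\kappa$ (the graph automorphism acts first) and, in the chain-based variant, the bookkeeping of the right-coset identity $T_L\alpha^{\kappa}\gamma^{-1}=T_L\beta$ — which in the construction above has been arranged to hold as an honest equality of group elements rather than merely of cosets.
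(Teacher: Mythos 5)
Your proof is correct and follows essentially the same approach as the paper: the ``if'' direction is the identical computation (choose $\kappa$ with $\kappa(K)=L$, then adjust by a suitable $\gamma\in S_p$ so that $\alpha^{\kappa}\gamma^{-1}=\beta$), and your ``only if'' direction is a slight streamlining of the paper's, reading $\kappa(K)=L$ directly off the first component of $\rho(K,\epsilon)$ rather than first passing through Lemma~\ref{0,j-trans} and the vertex-stabilizer identification. Both variants are sound, since the equivalence relation defining faces preserves the edge-set component exactly.
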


\begin{proof}
Suppose $\po_G$ is $j$-face transitive. Let $K, L \subseteq E(G)$ with $|K| = |L| = j$. Then, by Lemma~\ref{0,j-trans}, since $\po_G$ is also $\{0,j\}$-transitive, there exists an automorphism $\rho$ of $\po_G$ that maps the chain $\{(\emptyset,\epsilon), (K,\epsilon)\}$ of type $\{0,j\}$ to $\{(\emptyset,\epsilon), (L,\epsilon)\}$. By our previous discussion, since $\rho$ fixes the vertex $(\emptyset, \epsilon)$, we must have $\rho \in \Gamma(G)$, $\rho=\kappa$ (say), and hence $\kappa(K)=L$. Thus $G$ is $j$-subgraph transitive.

For the converse, let $(K,\alpha)$, $(L,\beta)$ be two $j$-faces of $\po_G$, so in particular $|K| = |L| = j$. Since $G$ is $j$-subgraph transitive, there exists a graph automorphism $\kappa$ of $G$ with $\kappa(K) = L$. Let $\gamma \in S_p$ such that $\alpha^\kappa= \beta\gamma$, and let $\rho$ be the automorphism of $\po_G$ defined by $\rho:= \gamma \kappa$. Then, $\rho(K, \alpha) = (\kappa(K), \alpha^\kappa \gamma^{-1})=(L,\beta)$. Hence $\po_G$ is $j$-face transitive.
\end{proof}

Now bearing in mind that $G$ is $j$-subgraph transitive if and only if $G$ is $(q-j)$-subgraph transitive, we immediately have the following consequence.

\begin{lemma} 
\label{qmjtran}
When $j\geq 1$ the graphicahedron $\mathcal{P}_G$ is $j$-face transitive if and only if $\mathcal{P}_G$ is $(q-j)$-face transitive.
\end{lemma}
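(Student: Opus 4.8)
The plan is to chain the equivalence of Lemma~\ref{1tran} with the complementation observation recorded just before the statement, so the argument is short. First I would restrict attention to the range $1\le j\le q-1$, which forces $1\le q-j\le q-1$ as well; then Lemma~\ref{1tran} applies to \emph{both} parameters and yields the two equivalences: $\po_G$ is $j$-face transitive $\iff$ $G$ is $j$-subgraph transitive, and $\po_G$ is $(q-j)$-face transitive $\iff$ $G$ is $(q-j)$-subgraph transitive.

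Next I would invoke the fact, already noted in the text, that $G$ is $j$-subgraph transitive if and only if $G$ is $(q-j)$-subgraph transitive. The reason is that the complementation map $H\mapsto H^c$ is a $\Gamma(G)$-equivariant bijection from the set of $j$-subgraphs of $G$ onto the set of $(q-j)$-subgraphs of $G$: for $\kappa\in\Gamma(G)$ one has $\kappa(E(G)\setminus E(H))=E(G)\setminus\kappa(E(H))$, hence $\kappa(H^c)=\kappa(H)^c$, so $\Gamma(G)$ acts transitively on one set exactly when it acts transitively on the other. Combining the three equivalences gives, for $1\le j\le q-1$, that $\po_G$ is $j$-face transitive if and only if it is $(q-j)$-face transitive.

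Finally I would dispose of the boundary case $j=q$ (so $q-j=0$) by hand, since Lemma~\ref{1tran} is phrased for positive ranks and $0$-face transitivity is not covered by it. Here there is nothing to prove: $\po_G$ has a unique face of rank $q$, so it is trivially $q$-face transitive; and the subgroup $S_p\le\Gamma(\po_G)$ acts (simply) transitively on the vertices of $\po_G$, so $\po_G$ is trivially $0$-face transitive. Thus both statements hold and the equivalence is vacuously true in this case. There is no genuine obstacle in this proof; the only point requiring a moment's care is keeping track of the range of ranks for which Lemma~\ref{1tran} is available, which is precisely why the case $j=q$ must be treated separately.
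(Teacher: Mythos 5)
Your proof is correct and is essentially the paper's argument: the authors likewise obtain Lemma~\ref{qmjtran} as an immediate consequence of Lemma~\ref{1tran} combined with the observation that $G$ is $j$-subgraph transitive if and only if it is $(q-j)$-subgraph transitive. Your explicit $\Gamma(G)$-equivariance of complementation and your separate treatment of the boundary case $j=q$ (where $\po_G$ has a unique $q$-face and $S_p$ acts transitively on vertices) are just careful spellings-out of details the paper leaves implicit.
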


It is immediately clear from Lemma~\ref{1tran} that a graphicahedron $\mathcal{P}_G$ is edge-transitive if and only if the underlying graph $G$ is edge-transitive. There is a wide variety of interesting edge-transitive graphs, but except for the $q$-cycles $C_q$ we will not further investigate them here. However, as we will see in Theorem~\ref{starchar}, when $2\leq j\leq q-2$ (and hence $q\geq 4$) there is just one connected $j$-subgraph transitive graph with $q$ edges, namely the $q$-star $K_{1,q}$ (with $p=q+1$);  this is $k$-subgraph transitive for $k=0,\ldots,q$. Note that the two connected graphs with three edges, $K_{1,3}$ and $C_3$, both are $j$-transitive for $j=0,1,2$. 

The proof of Theorem~\ref{starchar} requires the following lemma. For a finite graph $H$ and a vertex $v$ of $H$, we let $d_H(v)$ denote the degree of $v$ in $H$.

\begin{lemma}
\label{starlemma}
Let $G$ be a finite connected graph with $q$ edges, let $2\leq j\leq q$, let $q\geq 4$ when $j=2$, and let $G$ be $j$-subgraph transitive. If $G$ has a $j$-subgraph (with a connected component) isomorphic to $K_{1,j}$, then $G$ is isomorphic to $K_{1,q}$.
\end{lemma}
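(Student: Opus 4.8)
The strategy is to fix a $j$-subgraph $H_0 \cong K_{1,j}$ of $G$ (whose existence is assumed) and to exploit $j$-subgraph transitivity to propagate the ``star-like" structure of $H_0$ across all of $G$. Let $v$ be the center of the star $H_0$, so that $d_{H_0}(v)=j$ and the other $j$ edges of $H_0$ all meet at $v$. First I would observe that $G$ is connected with $q$ edges, so $G$ has at least one vertex of degree $\geq 2$; in fact I want to show that $G$ has a vertex $w$ with $d_G(w)=q$, which (together with connectivity and the edge count) forces $G = K_{1,q}$. The key point is that any $j$ edges of $G$ can be carried by a graph automorphism onto the $j$ edges of $H_0$, and graph automorphisms preserve degrees and incidences; so the ``local shape" of every $j$-edge subset must match that of a star $K_{1,j}$ plus isolated vertices.

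The main work is to show that $G$ has no pair of independent (non-adjacent) edges, i.e. $G$ has no $2$-subgraph isomorphic to $2K_2$. I would argue by contradiction: suppose $e,f$ are edges of $G$ with no common endpoint. Since $j\geq 2$, extend $\{e,f\}$ to a $j$-subset $K$ of $E(G)$ (possible as $q\geq j$), and by $j$-subgraph transitivity map $K$ to $E(H_0)$ by some $\kappa\in\Gamma(G)$. Then $\kappa(e)$ and $\kappa(f)$ are two edges of the star $H_0$, hence share the center $v$ — contradicting that $\kappa$, being a graph automorphism, preserves the non-adjacency of $e$ and $f$. (When $j=2$ one must be slightly careful that a $2$-subset can be chosen equal to $\{e,f\}$, which is immediate; the hypothesis $q\geq 4$ for $j=2$ is what guarantees, via the characterization being built towards, that this forcing is not vacuous, but for this lemma the argument already goes through with $q\geq j=2$.) Consequently every two edges of $G$ share an endpoint.

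From ``every two edges of $G$ are adjacent" a short classical graph-theory argument finishes it: a graph in which every two edges meet is either a triangle $C_3$ or a star $K_{1,m}$ (this is the edge-clique version of the Helly property for intervals, or just a direct case analysis on three mutually adjacent edges). Since $G$ is connected with $q$ edges and, by hypothesis, has a $j$-subgraph isomorphic to $K_{1,j}$ with $j\geq 2$ — and when $j=2$ we have $q\geq 4>3$, while for $j\geq 3$ a $K_{1,j}$ subgraph cannot sit inside a triangle — the triangle case is excluded, so $G\cong K_{1,q}$.

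The step I expect to be the main obstacle is the contradiction argument ruling out independent edges, specifically handling the boundary case $j=2$ cleanly and making sure the chosen $j$-subset really does contain a pre-chosen pair of independent edges when we also need its image to be exactly $E(H_0)$ (not merely isomorphic to it); this is where the precise definition of $j$-subgraph transitivity — transitivity on edge \emph{sets} of size $j$, via automorphisms of $G$ — is used in an essential way, and one must be careful that ``$G$ has a $j$-subgraph isomorphic to $K_{1,j}$" is about an induced-on-edges star component rather than the whole spanning subgraph. Everything else is routine degree/incidence bookkeeping.
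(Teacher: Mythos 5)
Your proof is correct, but it takes a genuinely different route from the paper's. The paper fixes the star $H$ with center $v_0$, performs single-edge swaps $H(e)=(H\setminus e_0)\cup e$ for each edge $e\notin E(H)$, and compares the resulting degree multisets (and, for $j=2$, connectivity of $2$-subgraphs) to force $v_0$ to lie on every edge of $G$; the case $j=2$ requires a separate, somewhat delicate subargument. You instead use the transitivity only once, at the level of a pair of edges: any two independent edges of $G$ would sit inside some $j$-set of edges, whose image under an automorphism is the edge set of the star $H_0$, in which every two edges meet --- contradicting that automorphisms preserve non-adjacency of edges. You then invoke the classical fact that a graph whose edges pairwise intersect is a triangle or a star, and exclude the triangle (by $q\geq 4$ when $j=2$, and by the absence of a degree-$\geq 3$ vertex when $j\geq 3$). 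Your argument is cleaner in that it avoids degree-sequence bookkeeping, treats $j=2$ and $j\geq 3$ uniformly up to the final step, and makes transparent exactly where the hypothesis $q\geq 4$ for $j=2$ is used (ruling out $C_3$, which is indeed a genuine counterexample when $j=2$, $q=3$); the cost is an appeal to the triangle-or-star characterization, which the paper's self-contained swap argument does not need. One small point worth making explicit in a final write-up: since the hypothesized $j$-subgraph has exactly $j$ edges and its $K_{1,j}$ component already accounts for all of them, every two edges of $E(H_0)$ really do share the center, which is what your contradiction rests on.
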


\begin{proof}
Let $H$ be a $j$-subgraph of $G$ (with a connected component) isomorphic to $K_{1,j}$, let $v_0$ denote the vertex of $H$ of degree $j$, and let $e_0$ be an edge of $H$ with vertex $v_0$. Then each vertex of $H$ adjacent to $v_0$ has degree $1$ in $H$; and all other vertices of $H$ distinct from $v_0$ are isolated vertices of $H$, of degree $0$ in~$H$. By the $j$-subgraph transitivity, if $e$ is an edge of $G$ not belonging to $H$, then the new $j$-subgraph $H(e):=(H\! \setminus\! e_0) \cup e$ (obtained by replacing $e_0$ by $e$) must be equivalent to $H$ under $\Gamma(G)$ and hence must have the same set of vertex degrees as $H$. 

We need to show that $v_0$ is a vertex of every edge of $G$. This trivially holds for the edges of $H$. Now suppose $e$ is an edge of $G$ not belonging to $H$. We want to prove that $v_0$ is a vertex of $e$. The case $j\geq 3$ is easy; in fact, if $v_0$ is not a vertex of $e$, then $v_0$ has a degree, namely $j-1$, in $H(e)$ distinct from $0$, $1$ and $j$, contradicting the fact that $H$ and $H(e)$ have the same set of vertex degrees.. 
The case $j=2$ is slightly different. Let $e_1$ denote the (unique) edge of $H$ distinct from $e_0$. Now suppose $v_0$ is not a vertex of $e$. Then $e$ must join the two vertices of $H$ distinct from $v_0$; otherwise, at least one of the two $2$-subgraphs $H(e)$ and $(H\! \setminus\! e_1) \cup e$ is not connected and hence cannot be equivalent to $H$ under $\Gamma(G)$. But since $q\geq 4$ when $j=2$, there is an edge $e'$ of $G$ distinct from $e_0$, $e_1$ and $e$. In particular, $e'$ has no vertex in common with at least one edge from among $e_0,e_1,e$; together with this edge, $e'$ forms a $2$-subgraph which is not connected. Again this is impossible and shows that $v_0$ must be a vertex of $e$. Hence, in either case, $v_0$ is a vertex of every edge of $G$. Thus $G$ is isomorphic to $K_{1,q}$. 
\end{proof}

The following theorem is of interest in its own right, independent of the study of graphicahedra. 

\begin{theorem}
\label{starchar}
Let $G$ be a finite connected graph with $q$ edges, let $2\leq j\leq q-2$, and let $G$ be $j$-subgraph transitive. Then $G$ is isomorphic to the $q$-star $K_{1, q}$.
\end{theorem}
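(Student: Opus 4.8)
The plan is to deduce the theorem from Lemma~\ref{starlemma} together with a short extremal argument about subgraphs of small maximum degree. First I would normalise $j$: since $G$ is $j$-subgraph transitive it is also $(q-j)$-subgraph transitive, and $2\le q-j\le q-2$, so replacing $j$ by $q-j$ if necessary we may assume $2j\le q$. Now suppose, for a contradiction, that $G\not\cong K_{1,q}$. A $j$-subgraph of $G$ has a connected component isomorphic to $K_{1,j}$ precisely when it consists of $j$ edges all sharing a common vertex, and such a $j$-subgraph exists if and only if $\Delta(G)\ge j$; hence Lemma~\ref{starlemma} forces $\Delta(G)\le j-1$. At the same time $\Delta(G)\ge 2$, since a connected graph with $q\ge 4$ edges cannot have maximum degree $\le 1$. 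Put $d:=\Delta(G)$, so $2\le d\le j-1$, and in particular $j\ge 3$.

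The key step is then to construct two $j$-subgraphs of $G$ with different maximum degrees; because $j$-subgraph transitivity forces all $j$-subgraphs of $G$ to be pairwise isomorphic, this already gives the contradiction. One $j$-subgraph $H_1$ of maximum degree $d$ is immediate: take a vertex $v$ with $d_G(v)=d$, all $d$ edges at $v$, and any further $j-d$ edges of $G$ (using $d\le j\le q$); then $d_{H_1}(v)=d$, and since $\Delta(H_1)\le\Delta(G)=d$ we get $\Delta(H_1)=d$. The other, a $j$-subgraph $H_2$ with $\Delta(H_2)\le d-1$, amounts to finding a subgraph of $G$ with at least $j$ edges and maximum degree at most $d-1$. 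For $d=2$ the graph $G$, being connected with maximum degree $2$, is a path $P_{q+1}$ or a cycle $C_q$, and since $j\le\lfloor q/2\rfloor$ it contains a matching with $j$ edges, which serves as $H_2$. For $d\ge 3$ I would invoke Vizing's theorem: $G$ admits a proper edge colouring with at most $d+1$ colour classes $M_1,\dots,M_{d+1}$ (some possibly empty); discarding the two smallest of them leaves a union of $d-1$ matchings, hence a subgraph of maximum degree at most $d-1$, and since the two discarded classes together contain at most $\frac{2q}{d+1}$ edges, this subgraph has at least $q-\frac{2q}{d+1}=\frac{d-1}{d+1}\,q\ge\frac{q}{2}\ge j$ edges; any $j$ of them give $H_2$.

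I expect the construction of $H_2$ to be the main obstacle: the reduction to $2j\le q$, the appeal to Lemma~\ref{starlemma}, and the construction of $H_1$ are routine, but producing a $j$-subgraph whose maximum degree is strictly below $\Delta(G)$ requires genuine input. For $d\ge 3$ this is precisely what Vizing's theorem supplies (alternatively one can stay elementary by analysing a maximum subgraph of maximum degree $\le d-1$ and again using $2j\le q$), whereas the degenerate case $d=2$ has to be handled by hand through the explicit path/cycle structure of $G$. Once $H_1$ and $H_2$ are in hand, they are non-isomorphic $j$-subgraphs of $G$, contradicting $j$-subgraph transitivity; hence no connected $j$-subgraph transitive graph with $q$ edges other than $K_{1,q}$ exists, which proves the theorem.
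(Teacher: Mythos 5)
Your proof is correct, and it reaches the conclusion by a genuinely different route from the paper's. Both arguments funnel through Lemma~\ref{starlemma}, and both exploit the symmetry $j\leftrightarrow q-j$; the difference lies in how the existence of a vertex of degree at least $j$ is forced. The paper works entirely locally: it takes a connected $j$-subgraph $H$, swaps a single edge $e_0$ at a vertex of minimum positive degree for an edge $e\notin H$, and compares the degree sequences and isolated-vertex counts of $H$ and $H(e)$, splitting into cases according to whether the minimum positive degree of $H$ is $1$ or at least $2$; this is elementary and self-contained but requires a somewhat delicate case analysis. You instead argue globally by contradiction: assuming $G\not\cong K_{1,q}$, Lemma~\ref{starlemma} caps $\Delta(G)$ at $j-1$, and you then exhibit two $j$-subgraphs with different maximum degrees --- one realising $\Delta(G)$ at a maximum-degree vertex, and one of strictly smaller maximum degree obtained (after normalising to $2j\le q$) from a matching when $\Delta(G)=2$ and from Vizing's theorem when $\Delta(G)\ge 3$, where discarding the two smallest of the $d+1$ colour classes leaves at least $\tfrac{d-1}{d+1}q\ge q/2\ge j$ edges of maximum degree at most $d-1$. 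The degree-sequence invariance under $\Gamma(G)$ then gives the contradiction. All the estimates check out (in particular the bound $a_1+a_2\le 2q/(d+1)$ on the two smallest classes, and the fact that $2\le\Delta(G)\le j-1$ forces $j\ge3$ and $q\ge6$ in the path/cycle case). What your approach buys is a shorter and more conceptual argument that isolates exactly where $j$-subgraph transitivity is used (equality of maximum degrees of $j$-subgraphs); the cost is the appeal to Vizing's theorem, a nontrivial external input that the paper's hands-on edge-swapping argument avoids. Your parenthetical elementary alternative to Vizing is only sketched, but the Vizing route as written is complete.
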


\begin{proof}
Our theorem follows directly from Lemma~\ref{starlemma} if we can show that $G$ has a $j$-subgraph (with a connected component) isomorphic to $K_{1,j}$; note here that $q\geq 4$, by our assumptions on $j$. Clearly, $G$ has a $j$-subgraph of this kind if and only if $G$ has a vertex of degree at least $j$. Hence, since $G$ is also $(q-j)$-transitive, it suffices to find a vertex in $G$ of degree greater than or equal to $j$ or $q-j$.

To this end, let $H$ be any connected $j$-subgraph of $G$; this $j$-subgraph exists since $G$ is connected. Let $v_0$ be a vertex of $H$ of minimum positive degree in $H$, let $v_1$ be a vertex adjacent to $v_0$ in $H$, and let $e_0:=\{v_0,v_1\}$. By the $j$-subgraph transitivity, if $e$ is any edge of $G$ not belonging to $H$, then $H(e)=(H\! \setminus\! e_0) \cup e$ is a $j$-subgraph equivalent to $H$ under $\Gamma(G)$ and in particular must have the same number of isolated vertices and the same set of vertex degrees as $H$. Moreover, if $u$ is any vertex of $G$, then its degrees in $H(e)$ and $H$ are related as follows:\ if $u\neq v_0,v_1$ then   
\[ d_{H(e)}(u) = \Bigg\{ 
\begin{array}{ll}
d_H(u) +1 &\mbox{if } u \mbox{ is a vertex of } e, \\
d_H(u)      &\mbox{if } u \mbox{ is not a vertex of }e;
\end{array} \]
and if $u\in\{v_0,v_1\}$ then  
\[ d_{H(e)}(u) = \Bigg\{ 
\begin{array}{ll}
d_H(u)      &\mbox{if } u \mbox{ is a vertex of } e, \\
d_H(u) -1  &\mbox{if } u \mbox{ is not a vertex of }e.
\end{array} \]
Now there are two possibilities for the minimum (positive) degree of $H$, that is, for $d_H(v_0)$.

First suppose $d_H(v_0)\geq 2$. Then $H$ cannot have any isolated vertices; otherwise, if $u$ is an isolated vertex of $H$ and $e'$ an edge of $G$ with vertex $u$, then $d_{H(e')}(u)=1$, so the $j$-subgraph $H(e')$ has a vertex of degree $1$ while $H$ does not have any such vertex. Now we can argue as follows. If $e$ is any of the $q-j$ edges of $G$ not belonging to $H$, then $v_0$ must be a vertex of $e$; otherwise, $d_{H(e)}(v_0)$ is one less than the minimum positive degree, $d_H(v_0)$, of $H(e)$, which is impossible. Therefore $d_G(v_0)\geq q-j$, so we have found a vertex of $G$ with a degree at least $q-j$, as desired. 

Now suppose $d_H(v_0)=1$. If $H$ happens to have no isolated vertices, then each of the $q-j$ edges $e$ of $G$ not belonging to $H$ must again have $v_0$ as a vertex; otherwise, the $j$-subgraph $H(e)$ does have an isolated vertex, namely $v_0$.
Thus $d_G(v_0)\geq q-j$, so again we found the desired vertex in this case. If $H$ does have an isolated vertex, we proceed as follows. Let $u$ be an isolated vertex of $H$, and let $e$ be an edge of $G$ joining $u$ to a vertex of positive degree in $H$. We claim that then each vertex in $H$ adjacent to the neighbor $v_1$ of $v_0$ must have degree $1$. In fact, if $v_1$ is joined by an edge $e_1$ to a neighbor $v$ with $d_H(v)>1$, then $v\neq v_0$ and $(H\! \setminus\! e_1) \cup e$ is a $j$-subgraph of $G$ with
one less isolated vertex than $H$, which is impossible; note here that the isolated vertices of this subgraph are just those of $H$, except $u$. Hence each vertex in $H$ adjacent to $v_1$ must have degree $1$ in $H$, and in particular cannot be a neighbor of any vertex other than $v_1$. But since $H$ is connected, the edges of $H$ with vertex $v_1$ then must comprise all $j$ edges of $H$. Thus $d_H(v_1)\geq j$, so now we found a vertex of degree at least $j$. This completes the proof.
\end{proof}

\section{The graphicahedra for $K_{1,q}$ and $C_q$}
\label{starcycle}

In this section we determine the structure of the graphicahedron for the $q$-star $K_{1,q}$ (with $q+1$ vertices and $q$ edges, all emanating from a ``central" vertex) and the $q$-cycle $C_q$, in each case with $q\geq 3$. These graphs have graph automorphism groups isomorphic to $S_q$ and $D_q$ (the dihedral group of order $2q$), so their graphicahedra (of rank $q$) have polytope automorphism groups $S_{q+1} \rtimes S_q$ and $S_{q} \rtimes D_q$, respectively. We also know from \cite[Cor. 5.1]{graphi} that $\po_{K_{1,q}}$ is a regular $q$-polytope.

It is convenient to use an alternative description of a graphicahedron $\mathcal{P}_G$ based on the line graph $L(G)$ of the underlying graph $G$ rather than on $G$ itself. The {\em line graph\/} $L(G)$ of a given graph $G$ is the graph whose vertices, referred to as {\em nodes\/}, are in one-to-one correspondence with the edges of $G$, and whose edges, referred to as {\em branches\/}, connect two nodes of $L(G)$ precisely when the corresponding edges of $G$ have a vertex of $G$ in common. In particular, $L(K_{1,q})=K_q$, the complete graph on $q$ vertices, and $L(C_q)=C_q$. 

The construction of the graphicahedron $\mathcal{P}_G$ for a given $(p,q)$-graph $G$ proceeded from a Cayley-graph for the symmetric group $S_p$ determined by the generating set ${\cal T}_G$ of $S_p$ consisting of the transpositions associated with the edges of $G$; more explicitly, each edge $e = \{i,j\}$ of $G$ gave rise to the transposition ${\tau}_{e}:= (i\;j)$ in ${\cal T}_G$. When we transition from $G$ to its line graph $L(G)$, these transpositions are naturally associated with the nodes of $L(G)$. Thus $L(G)$ can be viewed as a {\em diagram\/} for the symmetric group $S_p$, in which the nodes are labeled by transpositions in $S_p$, and in which the branches represent pairs of non-commuting transpositions (their product necessarily has period $3$). Hence, the transpositions associated with a pair of nodes of $L(G)$ commute if and only if these nodes are not joined by a branch of $L(G)$. 

\subsection{The $K_{1,q\,}$-graphicahedron}
\label{k1qgraphi}

Let $G=K_{1,q}$, where $q\geq 3$, and let $1,...,q+1$ denote the vertices of $G$ such that $q+1$ is the central vertex. Let $e_1,\ldots,e_q$ denote the edges of $G$ such that $e_{i}=\{i,q+1\}$ for $i=1,\ldots,q$. We begin by constructing a certain regular $q$-polytope $\po$ and then establish that this actually is the $K_{1,q\,}$-graphicahedron

Now the underlying diagram $L(G)$ is the complete graph $K_q$ on $q$ nodes. Label each node of $L(G)$ with the label, $i$, of the corresponding edge $e_i$ of $G$ that defines it. Thus the nodes receive the labels $1,\ldots,q$. Then  associate with each node $i$ the transposition in $S_{q+1}$ defined by $\tau_{i}:=(i\;q+1)$; this is just the transposition $\tau_{e_i}$ in ${\cal T}_G$, renamed. It is straightforward to check that $\tau_1,\ldots,\tau_q$ generate~$S_{q+1}$. 

In terms of the generating transpositions $\tau_1,\ldots,\tau_q$, the group $S_{q+1}$ is abstractly defined by the relations
\begin{equation}
\label{present}
\tau_{i}^{2} = (\tau_{i}\tau_{j})^{3} = (\tau_{i}\tau_{j}\tau_{k}\tau_{j})^{2} = \epsilon. 
\quad (1\leq i,j,k\leq q;\; i,j,k \mbox{ distinct}) 
\end{equation}
This follows from \cite[Theorem 9G4]{arp}, applied here with $\mathcal{N}=\{1,\ldots,q\}$, $s=2$, and $\mathcal{E}$ and $\mathcal{F}$, respectively, given by the set of all $2$-element or $3$-element subsets of $\mathcal{N}$, and with generators $\tau_1,\ldots,\tau_q$ for the corresponding group $\Gamma$. Since this presentation is completely symmetric in the generators, it is immediately clear that $S_{q+1}$ admits involutory group automorphisms $\omega_{1},\ldots,\omega_{q-1}$ acting on the generators $\tau_1,\ldots,\tau_q$ according to
\begin{equation}
\label{actionomega}
\omega_{j}(\tau_{j}) = \tau_{j+1}, \;\; \omega_{j}(\tau_{j+1}) = \tau_{j}, \;\;
\omega_{j}(\tau_{i}) = \tau_{i} \mbox{ for } i\neq j,j+1. 
\end{equation}
In fact, these $\omega_j$ are actually inner automorphisms and can be realized in $S_{q+1}$ by conjugation with the transposition $\widehat{\omega}_j := (j\; j+1)$, for $j=1,\ldots,q-1$. Note that $\omega_{1},\ldots,\omega_{q-1}$ generate a group of automorphisms of $S_{q+1}$ isomorphic to~$S_q$. 

\begin{figure}[h!]
\centering
{\begin{picture}(180,200)
\put(10,10){
\begin{picture}(160,190)
\put(0,30){\circle*{4}}
\put(120,0){\circle*{4}}
\put(75,180){\circle*{4}}
\put(150,90){\circle*{4}}
\put(0,30){\line(4,-1){120}}  
\put(0,30){\line(5,2){150}}
\put(0,30){\line(1,2){75}}
\put(75,180){\line(1,-4){45}}
\put(75,180){\line(5,-6){75}}  
\put(120,0){\line(1,3){30}}   
\put(-9,24){\sst{1}}
\put(120,-12){\sst{2}}
\put(154,87){\sst{3}}
\put(73,186){\sst{4}}
\put(76,31){$2$}
\put(118,65){$2$}
\put(53,65){$2$}
\put(70,90){$2$}
\put(50,0){$\omega_1$}
\put(140,35){$\omega_2$}
\put(118,138){$\omega_3$}
\end{picture}}
\end{picture}}
\caption{The group $S_5$ obtained when $q=4$.}
\label{grpsixssrel}
\end{figure}
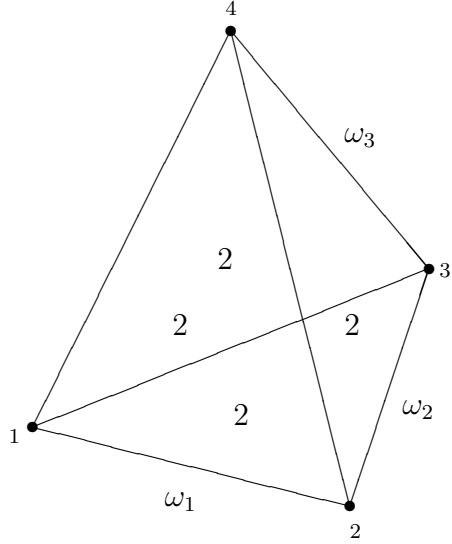

Figure~\ref{grpsixssrel} illustrates the situation when $q=4$. The nodes $1,\ldots,4$ of the diagram $K_4$ correspond to the generators $\tau_1,\ldots,\tau_4$ of $S_5$, respectively; a mark $2$ on a triangle $\{i,j,k\}$ of the diagram represents the corresponding non-Coxeter type relation (the last kind of relation) in (\ref{present}) involving the generators $\tau_i,\tau_j,\tau_k$. The automorphism $\omega_j$ then is associated with the branch connecting nodes $j$ and~$j+1$.

Thus we have an underlying group, $S_{q+1}$, whose distinguished generators are permuted under certain group automorphisms. This is the typical situation where the twisting technique of \cite[Ch. 8A]{arp} enables us to construct a regular polytope by extending the underlying group by certain of its group automorphisms. More specifically, in the present situation we find our regular $q$-polytope $\mathcal{P}$ by extending $S_{q+1}$ by $S_{q}$ as follows (see \cite[11H3]{arp} for the case $q=4$). Apply the ``twisting operation"
\begin{equation}
\label{twist}
(\tau_1,\ldots,\tau_q;\omega_1,\ldots,\omega_{q-1}) \mapsto 
(\tau_1,\omega_1,\ldots,\omega_{q-1}) 
=: (\sigma_0,\ldots,\sigma_{q-1})
\end{equation}
on $S_{q+1}=\langle\tau_1,\ldots,\tau_q\rangle$, in order to obtain the distinguished involutory generators $\sigma_0,\ldots,\sigma_{q-1}$ of the group $\Gamma(\mathcal{P})$ and hence the polytope $\mathcal{P}$ (of Schl\"afli type $\{6,3,\ldots,3\}$)  itself. Thus 
\[  \Gamma(\mathcal{P}) = \langle \sigma_{0},\ldots,\sigma_{q-1} \rangle
=  \langle \tau_1,\ldots,\tau_q \rangle \rtimes \langle \omega_1,\ldots,\omega_{q-1}\rangle
=  S_{q+1} \rtimes S_q,  \]
a semi-direct product. In particular, when $q=4$ we have $\Gamma(\mathcal{P})= S_{5} \rtimes S_4$ (see Figure~\ref{grpsixssrel}). Note here that the generators $\sigma_{0},\ldots,\sigma_{q-1}$ indeed make $S_{q+1} \rtimes S_q$ a string $C$-group in the sense of~\cite[Ch. 2E]{arp}; in particular, the proof of the intersection property boils down to observing that, for each $j\leq q-1$, 
\[ \langle\sigma_{0},\ldots,\sigma_{j-1}\rangle =
\langle \tau_1,\ldots,\tau_j \rangle \rtimes \langle \omega_1,\ldots,\omega_{j-1}\rangle 
= S_{j+1} \rtimes S_j, \]
and hence that  
\[ \begin{array}{rcl}
\langle\sigma_{0},\ldots,\sigma_{j-1}\rangle \cap \langle \sigma_{1},\ldots,\sigma_{q-1} \rangle \!\!
&=&\!\! (\langle \tau_1,\ldots,\tau_j \rangle \!\rtimes\! \langle \omega_1,\ldots,\omega_{j-1}\rangle) 
\cap \langle \omega_1,\ldots,\omega_{q-1}\rangle \\[.02in]
&=&\!\!\langle \omega_1,\ldots,\omega_{j-1}\rangle\\[.02in]
&=&\!\!\langle\sigma_{1},\ldots,\sigma_{j-1}\rangle .
\end{array} \]
 
Note that $\mathcal{P}$ is a simple polytope since its vertex-figures are isomorphic to $(q-1)$-simplices; in fact, the vertex-figure group is just $\langle \omega_1,\ldots,\omega_{q-1}\rangle = S_q$, with $\omega_1,\ldots,\omega_{q-1}$ occurring here as the distinguished generators for the group of the $(q-1)$-simplex. As for any regular polytope, the  vertices of $\mathcal{P}$ can be identified with the left cosets of its vertex-figure group, which here just means with the $(q+1)!$ elements of $S_{q+1}=\langle \tau_1,\ldots,\tau_q \rangle$. 

Finally, we need to prove that $\mathcal{P}$ is in fact the desired graphicahedron $\mathcal{P}_{G}$. Now, since both $\po$ and $\po_G$ are known to be regular, we can verify isomorphism on the level of regular polytopes where more techniques are available. In particular it suffices to establish an isomorphism between their groups that maps distinguished generators to distinguished generators.

To begin with, choose 
\[ \Phi := (\mathcal{L},\epsilon) =  \{(L_0,\epsilon),(L_1,\epsilon),\ldots,(L_q,\epsilon)\},\]
with $L_{i}:=\{e_1,\ldots,e_i\}$ for $i=0,\ldots,q$ and with $\mathcal{L}:=\{L_0, L_1, \dots, L_q\}$, as the base flag of $\po_G$ and compute the distinguished generators $\rho_0,\ldots,\rho_{q-1}$ of $\Gamma(\po_G)$ with respect to~$\Phi$. Since $\Gamma(\po_G)=S_{p}\rtimes \Gamma(G)$ (with $p=q+1$ and $\Gamma(G)\cong S_q$), we can write any isomorphism $\rho$ of $\po_G$ in the form $\rho=\gamma\kappa$ with $\gamma\in S_p$ and $\kappa\in \Gamma(G)$; then $\rho$ maps a face $(K,\alpha)$ of $\po_G$ to $(\kappa(K), \alpha^{\kappa}\gamma^{-1})$, with $\alpha^{\kappa}:=\kappa\alpha\kappa^{-1}$. Hence, if $\rho$ fixes each face of $\Phi$ except the $j$-face, then
$\kappa(L_i)=L_i$ and $\gamma\in T_{L_i}$ for each $i$ with $i\neq j$. Now, if $j=0$, then $\gamma=\tau_{e_1}$ (since $\gamma\in T_{L_1}=\langle\tau_{e_1}\rangle$) and $\kappa$ must be the trivial graph automorphism of $G$ (since it must fix every edge and $q\geq 2$). Thus $\rho_0=\tau_{e_1}$. Moreover, if $j\geq 1$, then $\gamma$ is the trivial element in $S_p$ (since $\gamma$ lies in the trivial group $T_{L_0}$) and $\kappa$ must be the graph automorphism $\kappa_j$ (say) of $G$ that interchanges the edges $e_{j}$ and $e_{j+1}$ while keeping all other edges of $G$ fixed. Thus $\rho_j=\kappa_j$ for $j=1,\ldots,q-1$.

Now we are almost done. Since $\Gamma(G)$ acts transitively on the edges of $G$ (while fixing the central vertex), the set of conjugates of $\tau_{e_1}=(1\;q+1)$ under $\Gamma(G)$ (now acting on the vertices of $G$) consists precisely of the transpositions $\tau_{e_j}=(j\;q+1)$ for $j=1,\ldots,q$. These transpositions generate the group $S_{p}$, with $p=q+1$. It is straightforward to check that they also satisfy the relations in (\ref{present}), which are known to form the relations in a presentation for $S_p$; as explained earlier, the latter follows from \cite[Theorem 9G4]{arp}. Thus the generators $\tau_{e_1},\ldots,\tau_{e_q}$ and the relations in (\ref{present}) determine a presentation for $S_p$, in just the same way as $\tau_1,\ldots,\tau_q$ and (\ref{present}) do. Moreover, conjugation in $S_p$ by the graph automorphism $\kappa_j$ (acting on vertices) of $G$ is a group automorphism of $S_p$, again denoted by $\kappa_j$, that permutes the generators $\tau_{e_1},\ldots,\tau_{e_q}$ in just the same way as $\omega_j$ permuted $\tau_{1},\ldots,\tau_{q}$; that is, 
\[ \kappa_{j}(\tau_{e_j}) = \tau_{e_{j+1}}, \;\; \kappa_{j}(\tau_{e_{j+1}}) = \tau_{e_j}, \;\;
\kappa_{j}(\tau_{e_i}) = \tau_{e_i} \mbox{ for } i\neq j,j+1. \]
It follows that the distinguished generators $\rho_0,\ldots,\rho_{q-1}$ of $\Gamma(\po_G)$ can be identified with those of $\Gamma(\po)$, in the sense that $\rho_i$ corresponds to $\sigma_i$ of (\ref{twist}) for each $i$. Since the distinguished generators of the group of a regular polytope determine the polytope up to isomorphism, it now follows that $\po_G$ is isomorphic to $\po$.

It is interesting to look at small values of $q$. The case $q=2$ had been excluded in the above; the $2$-star $K_{1,2}$ is a path of length $2$ and its graphicahedron $\po_{K_{1,2\,}}$ is just a hexagon $\{6\}$. When $q=3$ the resulting graphicahedron $\po_{K_{1,3\,}}$ is the toroidal regular map $\{6,3\}_{(2,2)}$ (see Coxeter \& Moser~\cite{cm}). For $q=4$ we obtain the universal locally toroidal regular $4$-polytope $\{\{6,3\}_{(2,2)},\{3,3\}\}$ (see~\cite[Cor. 11C8]{arp}), which was first discovered in Gr\"unbaum~\cite{grgcd}. 

The graphicahedra $K_{1,3\,}$ and $K_{1,4\,}$ are the first two members in an infinite sequence of polytopes linked inductively by amalgamation, as described in Theorem~\ref{staruniv}. We first require a lemma.

\begin{lemma}
\label{confsymgr}
Suppose $q\geq 4$ and $\mathcal{R}$ is the $(q-1)$-simplex with group 
$\Gamma(\mathcal{R})=\langle\eta_{1},\ldots,\eta_{q-1}\rangle\cong S_q$, where $\eta_{1},\ldots,\eta_{q-1}$ are the distinguished generators of $\Gamma(\mathcal{R})$, with the labeling starting at $1$ deliberately. Define the elements 
$\theta_{i}:= \eta_{i}\eta_{i-1}\ldots\eta_{1}$ for $i=1,\ldots,q-1$, as well as $\theta_{0}:=\epsilon$, the identity element. Then
\[ \eta_{j} \theta_{i} \,=\, \theta_{i}\eta_{j+1}, \theta_{i-1}, \theta_{i+1}, \theta_{i}\eta_j\]
according as $j\leq i-1$; $j=i$; $j=i+1$; $j\geq i+2$.
\end{lemma}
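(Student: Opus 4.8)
The plan is to establish the four cases by direct computation inside $\Gamma(\mathcal{R}) = S_q$, working only from the Coxeter presentation $\eta_i^2 = (\eta_i\eta_{i+1})^3 = (\eta_i\eta_j)^2 = \epsilon$ (for $|i-j|\geq 2$) of the symmetric group. I would first record the obvious case $i=0$: since $\theta_0 = \epsilon$, the identity $\eta_j\theta_0 = \theta_0\eta_j$ is trivial, and this is the ``$j\geq i+2$'' branch for $i=0$; so assume $i\geq 1$ from now on. The generators $\eta_1,\ldots,\eta_{q-1}$ are the standard transpositions $\eta_i = (i\ \ i+1)$, and $\theta_i = \eta_i\eta_{i-1}\cdots\eta_1 = (i\ \ i+1)(i-1\ \ i)\cdots(1\ \ 2)$ is the $(i+1)$-cycle $(1\ 2\ \cdots\ i\ i+1)^{-1}$ sending $i+1\mapsto i \mapsto \cdots \mapsto 1 \mapsto i+1$. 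From this permutation description each of the four assertions is a routine check on where the relevant points go; but since the lemma is really being used to feed into a presentation-based isomorphism argument, I would prefer to give the computations using only the braid and commutation relations, so that they transfer verbatim to any group presented the same way.

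The core of the argument is the single relation $\eta_{i+1}\theta_i = \theta_i \eta_i$, equivalently $\eta_{i+1}\eta_i\eta_{i-1}\cdots\eta_1 = \eta_i\eta_{i-1}\cdots\eta_1\eta_i$. I would prove this by induction on $i$ (or rather on the length of the tail): the base case $\eta_{i+1}\eta_i = \eta_i \cdot \eta_{i+1}$ is not what we want — instead the base of the relevant induction is the braid relation rewritten as $\eta_{i+1}\eta_i\eta_{i+1} = \eta_i\eta_{i+1}\eta_i$, but a cleaner route is to push $\eta_{i+1}$ to the right through $\eta_i\eta_{i-1}\cdots\eta_1$: it commutes with $\eta_{i-1},\ldots,\eta_1$ (all indices differ from $i+1$ by at least $2$), so $\eta_{i+1}\theta_i = \eta_{i+1}\eta_i\eta_{i-1}\cdots\eta_1$ and we must move $\eta_{i+1}$ past $\eta_i$ only. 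That single step is exactly the content of the braid relation $\eta_{i+1}\eta_i = \eta_i\eta_{i+1}\eta_i\eta_{i+1}^{-1}\cdot\eta_{i+1} = \ldots$; more directly, $\theta_i\eta_i = \eta_i\cdots\eta_1\eta_i$ and, since $\eta_i$ commutes with $\eta_{i-2},\ldots,\eta_1$, we can pull that trailing $\eta_i$ leftward past $\eta_{i-2}\cdots\eta_1$ to get $\eta_i\eta_{i-1}\eta_i\cdot\eta_{i-2}\cdots\eta_1$, and then $\eta_i\eta_{i-1}\eta_i = \eta_{i-1}\eta_i\eta_{i-1}$ — this does not immediately close, so the honest thing is to verify $\eta_{j}\theta_i$ for the two adjacent cases $j=i$ and $j=i+1$ together, by computing $\theta_{i-1}$ and $\theta_{i+1}$ and checking $\eta_i\theta_i = \theta_{i-1}$ and $\eta_{i+1}\theta_i = \theta_{i+1}$ on the nose: $\eta_i\theta_i = \eta_i\eta_i\eta_{i-1}\cdots\eta_1 = \eta_{i-1}\cdots\eta_1 = \theta_{i-1}$ is immediate from $\eta_i^2=\epsilon$, and $\eta_{i+1}\theta_i = \eta_{i+1}\eta_i\eta_{i-1}\cdots\eta_1 = \theta_{i+1}$ is immediate from the definition of $\theta_{i+1}$. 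So in fact the $j=i$ and $j=i+1$ cases are trivial rewrites, not braid arguments at all.

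That leaves the two ``generic'' cases. For $j\geq i+2$: here $\eta_j$ commutes with each of $\eta_i,\eta_{i-1},\ldots,\eta_1$ (all have index $\leq i \leq j-2$), so $\eta_j\theta_i = \eta_j\eta_i\cdots\eta_1 = \eta_i\cdots\eta_1\eta_j = \theta_i\eta_j$, as claimed. For $j\leq i-1$: I want $\eta_j\theta_i = \theta_i\eta_{j+1}$, i.e. $\eta_j\eta_i\eta_{i-1}\cdots\eta_1 = \eta_i\eta_{i-1}\cdots\eta_1\eta_{j+1}$. On the left, $\eta_j$ commutes with $\eta_i,\eta_{i-1},\ldots,\eta_{j+2}$ (indices differ by $\geq 2$), so it slides right to give $\eta_i\cdots\eta_{j+2}\cdot(\eta_j\eta_{j+1}\eta_j)\cdot\eta_{j-1}\cdots\eta_1$; apply the braid relation $\eta_j\eta_{j+1}\eta_j = \eta_{j+1}\eta_j\eta_{j+1}$ to rewrite the middle block as $\eta_{j+1}\eta_j\eta_{j+1}$; then $\eta_{j+1}$ (the rightmost factor of that block) commutes with $\eta_{j-1},\ldots,\eta_1$ and slides to the far right, yielding $\eta_i\cdots\eta_{j+2}\eta_{j+1}\eta_j\eta_{j-1}\cdots\eta_1\eta_{j+1} = \theta_i\eta_{j+1}$. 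This is the one place the braid relation does genuine work, and it is the step I expect to be the main (though still modest) obstacle — mostly a matter of keeping the commutation bookkeeping straight; everything else reduces to $\eta_i^2=\epsilon$, far-commutation, and the definitions of the $\theta_i$. Assembling the four cases completes the proof.
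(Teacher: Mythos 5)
Your proposal is correct and follows essentially the same route as the paper: both note the option of verifying the identity on permutations, then carry out the computation from the Coxeter relations, with the $j=i$ and $j=i+1$ cases being immediate, the $j\geq i+2$ case being pure commutation, and the $j\leq i-1$ case using far-commutation plus one application of the braid relation $\eta_j\eta_{j+1}\eta_j=\eta_{j+1}\eta_j\eta_{j+1}$. The only (cosmetic) difference is that you treat $j\leq i-1$ uniformly where the paper splits it into $j=i-1$ and $j\leq i-2$.
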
 

\begin{proof}
One way to prove this lemma is to represent the generators $\eta_{j}$ by the transposition $(j\;j+1)$ of $S_q$ for each $j$ and then simply verify the statement on the level of permutations.

Alternatively we can use the defining relations of $\Gamma(\mathcal{R})$ as a Coxeter group (of type $A_{q-1}$). When  $j=i,i+1$ the statement is immediately clear from the definitions, and when $j\geq i+2$ it follows directly from the standard commutation relations for the generators of $\Gamma(\mathcal{R})$. Now when $j=i-1$ the relation $\eta_{i-1}\eta_{i}\eta_{i-1}= \eta_{i}\eta_{i-1}\eta_{i}$, combined with the commutation relations, gives
\[ \eta_{j} \theta_{i}=\eta_{i-1}\eta_{i}\eta_{i-1}\theta_{i-2}
=\eta_{i}\eta_{i-1}\eta_{i}\theta_{i-2}=\eta_{i}\eta_{i-1}\theta_{i-2}\eta_{i}
=\theta_{i}\eta_i . \]
Finally, when $j\leq i-2$ we similarly have
\[\begin{array}{rcl}
\eta_{j} \theta_{i} &=&\eta_{j}\eta_{i}\ldots\eta_{j+2}\eta_{j+1}\eta_{j}\eta_{j-1}\eta_{j-2}\ldots\eta_{1}\\[.02in]
&=& \eta_{i}\ldots\eta_{j+2}(\eta_{j}\eta_{j+1}\eta_{j})\eta_{j-1}\eta_{j-2}\ldots\eta_{1}\\[.02in]
&=& \eta_{i}\ldots\eta_{j+2}(\eta_{j+1}\eta_{j}\eta_{j+1})\eta_{j-1}\eta_{j-2}\ldots\eta_{1}\\[.02in]
&=& \eta_{i}\ldots\eta_{j+2}\eta_{j+1}\eta_{j}\eta_{j-1}\eta_{j-2}\ldots\eta_{1}\eta_{j+1}\\[.02in]
&=& \theta_{i}\eta_{j+1}.
\end{array} \] 
This completes the proof.
\end{proof}

Recall that if two regular $n$-polytopes $\mathcal{P}_1$ and $\mathcal{P}_2$ are the facets and vertex-figures, respectively, of a regular $(n+1)$-polytope, then there is also a universal such $(n+1)$-polytope, denoted $\{\mathcal{P}_1,\mathcal{P}_2\}$, of which any other regular $(n+1)$-polytope with facets $\mathcal{P}_1$ and vertex-figures $\mathcal{P}_2$ is a quotient (see \cite[p.97]{arp}). The locally toroidal $4$-polytope mentioned earlier is an example of a universal regular polytope, with facets $\{6,3\}_{(2,2)}$ and vertex-figures $\{3,3\}$.

\begin{theorem}
\label{staruniv}
The graphicahedron $\po_{K_{1,q\,}}$, with $q\geq 4$, is the universal regular $q$-polytope with graphicahedra $\po_{K_{1,q-1\,}}$ as facets and with $(q-1)$-simplices $\{3^{q-2}\}$ as vertex-figures. Thus $\po_{K_{1,q\,}} = \{\po_{K_{1,q-1\,}},\{3^{q-2}\}\}$. Moreover, $\Gamma(\po_{K_{1,q\,}}) = S_{q+1} \times S_{q}$, a direct product. 
\end{theorem}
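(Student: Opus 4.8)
The plan is to prove the two assertions of Theorem~\ref{staruniv} in turn: first the universality statement $\po_{K_{1,q\,}} = \{\po_{K_{1,q-1\,}},\{3^{q-2}\}\}$, and then the direct-product decomposition $\Gamma(\po_{K_{1,q\,}}) = S_{q+1}\times S_q$. For universality, recall from the discussion preceding the theorem that the facets of $\po_{K_{1,q\,}}$ are indeed copies of $\po_{K_{1,q-1\,}}$ and the vertex-figures are $(q-1)$-simplices $\{3^{q-2}\}$ (this was noted in the abstract-level description of the $K_{1,q}$-graphicahedron and follows from the twisting construction, since $\langle\sigma_0,\dots,\sigma_{q-2}\rangle = S_q\rtimes S_{q-1}$ is the group of $\po_{K_{1,q-1\,}}$ and $\langle\sigma_1,\dots,\sigma_{q-1}\rangle = S_q$ is the group of the simplex). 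By the universality criterion of \cite[p.~97]{arp}, to show $\po_{K_{1,q\,}}$ is the \emph{universal} such polytope it suffices to show that its group $\Gamma(\po_{K_{1,q\,}})$ is the Coxeter-type amalgamated product determined by the presentations of the facet group and the vertex-figure group glued along their common subgroup $\langle\sigma_1,\dots,\sigma_{q-2}\rangle$; equivalently, I would exhibit a presentation of $\Gamma(\po_{K_{1,q\,}}) = S_{q+1}\rtimes S_q$ on the generators $\sigma_0,\dots,\sigma_{q-1}$ whose relations are exactly the string Coxeter relations of type $\{6,3,\dots,3\}$ together with the extra relations already present in a presentation of the facet group $\Gamma(\po_{K_{1,q-1\,}})$, and argue no further relations are needed.

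The cleanest route to that presentation is to go back to the relations~(\ref{present}) for $S_{q+1}=\langle\tau_1,\dots,\tau_q\rangle$ and push them through the twisting substitution~(\ref{twist}), $\sigma_0=\tau_1$, $\sigma_j=\omega_j$ for $j\geq 1$. Here Lemma~\ref{confsymgr} is the key tool: with $\eta_j$ playing the role of $\omega_j$ and $\theta_i:=\omega_i\omega_{i-1}\cdots\omega_1$, the lemma tells us exactly how the $\omega$'s conjugate the ``translation-like'' elements $\theta_i$, and since $\tau_{i+1}=\widehat\omega_i\tau_i\widehat\omega_i = \omega_i(\tau_i)$ we get $\tau_{i+1}=\theta_i\,\tau_1\,\theta_i^{-1}$, i.e.\ every $\tau_k$ is a specific word $\theta_{k-1}\sigma_0\theta_{k-1}^{-1}$ in $\sigma_0,\dots,\sigma_{q-1}$. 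Substituting these expressions into~(\ref{present}) and simplifying using the $S_q$-relations among the $\omega_j$ and the commutation data of Lemma~\ref{confsymgr} should collapse the whole family~(\ref{present}) down to: the string relations $\sigma_0^2 = \sigma_j^2 = (\sigma_0\sigma_1)^6 = (\sigma_{j}\sigma_{j+1})^3 = (\sigma_i\sigma_j)^2\;(|i-j|\geq 2) = \epsilon$, plus precisely the one extra ``defining relation'' that distinguishes $\po_{K_{1,q-1\,}}$ from the free string Coxeter group $[6,3^{q-2}]$ — which by induction is built only from $\sigma_0,\dots,\sigma_{q-2}$. That is exactly the presentation of the universal $\{\po_{K_{1,q-1\,}},\{3^{q-2}\}\}$, so the two polytopes coincide. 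The base case $q=4$ is already recorded in the text as $\{\{6,3\}_{(2,2)},\{3,3\}\}$, giving the induction its start.

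For the direct-product claim, the point is that although $\Gamma(\po_{K_{1,q\,}})$ was built as a \emph{semi}direct product $S_{q+1}\rtimes S_q$, the action of the $S_q$ factor $\langle\omega_1,\dots,\omega_{q-1}\rangle$ on $S_{q+1}$ by conjugation is by \emph{inner} automorphisms (each $\omega_j$ is realized by conjugation with $\widehat\omega_j=(j\;j+1)\in S_{q+1}$, as stated in the text). Hence I would set, for each $j$, $\mu_j := \omega_j\,\widehat\omega_j^{-1}$, viewing $\widehat\omega_j$ as an element of the $S_{q+1}$ factor; these $\mu_j$ commute with every element of $S_{q+1}$ by construction, so $\langle\mu_1,\dots,\mu_{q-1}\rangle$ is a normal subgroup centralizing $S_{q+1}$, it is isomorphic to $S_q$ (the map $\omega_j\mapsto\mu_j$ is an anti/iso-morphism because the $\widehat\omega_j$ already satisfy the $S_q$ braid relations inside $S_{q+1}$, so the ``correction'' is consistent), it intersects $S_{q+1}$ trivially (a product $\mu$ of $\mu_j$'s lies in $S_{q+1}$ only if the corresponding product of $\omega_j$'s equals the corresponding product of $\widehat\omega_j$'s, forcing it to centralize $S_{q+1}$ and lie in its center, which is trivial for $q+1\geq 3$), and together with $S_{q+1}$ it generates the whole group. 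Therefore $\Gamma(\po_{K_{1,q\,}}) = S_{q+1}\times\langle\mu_1,\dots,\mu_{q-1}\rangle \cong S_{q+1}\times S_q$.

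**The main obstacle** I expect is the bookkeeping in the second paragraph: verifying by induction that the substitution~(\ref{twist}) turns~(\ref{present}) into the string Coxeter relations plus exactly the \emph{one} facet-defining relation, with nothing left over. The three sub-families of~(\ref{present}) (the $\tau_i^2$, the $(\tau_i\tau_j)^3$, and the non-Coxeter $(\tau_i\tau_j\tau_k\tau_j)^2$) each rewrite differently, and keeping track of which rewrites are consequences of the string relations alone versus which genuinely need the extra facet relation requires careful use of all four cases of Lemma~\ref{confsymgr}; getting the induction hypothesis stated in the right form (a presentation of $\Gamma(\po_{K_{1,q-1\,}})$ as a string $C$-group with a known, explicitly bounded set of extra relations) is what makes or breaks the argument. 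By contrast the universality reduction via \cite[p.~97]{arp} and the direct-product argument are both short once that presentation is in hand.
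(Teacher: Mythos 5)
Your overall architecture matches the paper's more closely than you might expect: both approaches reduce the universality claim to showing that the defining relations (\ref{present}) of $S_{q+1}$ are forced by the amalgamated (string Coxeter plus facet) relations, both use Lemma~\ref{confsymgr} to control how the vertex-figure generators conjugate the elements $\theta_i\sigma_0\theta_i^{-1}$, and both induct on $q$ through the facet. Your direct-product argument is correct and is in substance the paper's: the paper embeds $S_{q+1}\rtimes S_q$ into $S_{2q+1}$ via $\omega_i=(i\;\,i+1)(q+i+1\;\,q+i+2)$, which is exactly your splitting of $\omega_i$ into $\widehat{\omega}_i$ times a factor $\mu_i$ centralizing $S_{q+1}$.

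The universality half, however, has a genuine gap, and it sits exactly where you flag ``the main obstacle.'' To prove that $\mathcal{Q}:=\{\po_{K_{1,q-1\,}},\{3^{q-2}\}\}$ is no larger than $\po_{K_{1,q\,}}$, you must work in the \emph{abstractly presented} group $\Gamma(\mathcal{Q})$ and derive the relations (\ref{present}) for the conjugates $\mu_{i+1}:=\theta_i\eta_0\theta_i^{-1}$ from the amalgamated relations alone; substituting into (\ref{present}) inside the concrete group $S_{q+1}\rtimes S_q$, where those relations hold trivially, proves nothing about the universal group, and asserting that everything ``should collapse'' is precisely the content that needs proof. The paper spends the bulk of its argument on this derivation: it first shows the normal closure $N_0$ of $\eta_0$ is generated by $\mu_1,\ldots,\mu_q$ (via the conjugation formula (\ref{eqet})), then reduces every relator of (\ref{present}) involving $\mu_q$ to a conjugate relator supported in the facet subgroup, where the induction hypothesis applies. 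The hardest case, the relator $\mu_{q-2}\mu_{q}\mu_{q-1}\mu_{q}$, requires the identity $\eta_{q-1}\eta_{q-2}\eta_{q-3}\,\mu_q\,\eta_{q-3}\eta_{q-2}\eta_{q-1}=\mu_{q-1}$, which rests on the period-$4$ Petrie polygon of the $3$-simplex; nothing in your outline produces this. Moreover, even once $N_0$ is known to be a quotient of $S_{q+1}$, one still needs the counting step $|\Gamma(\mathcal{Q})|=|N_0\Gamma_0|\leq (q+1)!\,q!=|\Gamma(\po_{K_{1,q\,}})|$ to conclude that the covering $\mathcal{Q}\rightarrow\po_{K_{1,q\,}}$ is an isomorphism; ``argue no further relations are needed'' is not a substitute for this. (A minor further caveat: your plan assumes $\Gamma(\po_{K_{1,q-1\,}})$ is the string Coxeter group modulo a \emph{single} extra relation; the argument only needs the extra relations to be supported on $\sigma_0,\ldots,\sigma_{q-2}$, and the one-relation claim is itself unproved and unnecessary.)
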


\begin{proof}
Any graphicahedron is a simple polytope, and hence $\po_{K_{1,q\,}}$ has $(q-1)$-simplices $\{3^{q-2}\}$ as vertex-figures. (Recall from Section~\ref{dron} that an abstract polytope is simple precisely when each vertex-figure is isomorphic to a simplex.) The subgraphs of $K_{1,q}$ with $q-1$ edges are obtained from $K_{1,q}$ by the deletion of a single edge and in particular are subgraphs isomorphic to $K_{1,q-1}$. On the other hand, the facets of $\po_{K_{1,q\,}}$ are precisely determined by such subgraphs and hence are $K_{1,q-1\,}$-graphicahedra. Thus $\po_{K_{1,q\,}}$ is a regular polytope with graphicahedra $\po_{K_{1,q-1\,}}$ as facets and $(q-1)$-simplices $\{3^{q-2}\}$ as vertex-figures. In particular, $\po_{K_{1,q\,}}$ is a quotient of the universal polytope $\mathcal{Q}:=\{\po_{K_{1,q-1\,}},\{3^{q-2}\}\}$. We need to establish that $\po_{K_{1,q\,}}$ and $\mathcal{Q}$ are isomorphic when $q\geq 4$. (Note that the universality fails when $q=3$. Still, $\po_{K_{1,3\,}}=\{6,3\}_{(2,2)}$ is a $3$-polytope with facets $\po_{K_{1,2\,}}= \{6\}$ and vertex-figures $\{3\}$, and group $\Gamma(\po_{K_{1,3\,}}) = S_{4} \times S_{3}$.)

To this end, let $\Gamma:=\Gamma(\mathcal{Q})=\langle \eta_0,\ldots,\eta_{q-1}\rangle$, where $\eta_0,\ldots,\eta_{q-1}$ are the distinguished generators of $\Gamma(\mathcal{Q})$. Then $\Gamma_{0}:=\langle \eta_1,\ldots,\eta_{q-1}\rangle$ is the group of the vertex-figure $\{3^{q-2}\}$ of $\mathcal{Q}$ and is isomorphic to $S_{q}$. Now let $N_0$ denote the normal closure of $\eta_0$ in $\Gamma$, that is,  
\[ N_{0}:=\langle \phi\eta_0\phi^{-1} \mid \phi \in \Gamma\rangle . \] 
Then, by \cite[Lemma~4E7(c,d)]{arp} (applied with $k=0$ and $N_{0}^{-}=N_0$),
\[ N_{0}=\langle \phi\eta_0\phi^{-1} \mid \phi \in \Gamma_0\rangle  \] 
and $\Gamma = N_{0}\Gamma_{0} = \Gamma_{0}N_{0}$ (as a product of subgroups in $\Gamma$). Our goals are to show that $N_{0}\cong S_{q+1}$ and the product of subgroups is semi-direct.
 
Consider the following involutory elements $\mu_1,\ldots,\mu_{q}$ of $N_0$ defined by
\[ \mu_{i+1}:=\theta_{i}\eta_{0}\theta_{i}^{-1} \;\;(i=0,\ldots,q-1), \]
where $\theta_{0}:=\epsilon$ and $\theta_{i}:= \eta_{i}\eta_{i-1}\ldots\eta_{1}$ for $i=1,\ldots,q-1$. Then $\mu_{1}=\eta_0$ and $\mu_{i+1}=\eta_{i}\mu_{i}\eta_{i}$ for $i\geq 1$. We claim that $N_0$ is generated by $\mu_1,\ldots,\mu_{q}$. Clearly it suffices to show that the vertex-figure subgroup $\Gamma_0$ acts, by conjugation, as a permutation group on the set $\{\mu_1,\ldots,\mu_d\}$; more precisely, the conjugate of an element $\mu_{i+1}$ by a generator $\eta_j$ of $\Gamma_0$ is again a generator $\mu_k$ for some $k$. Now, by Lemma~\ref{confsymgr}, 
\[ \eta_{j}\mu_{i+1}\eta_{j}
= (\eta_{j}\theta_{i})\eta_{0}(\eta_{j}\theta_{i})^{-1} \;\;(i=0,\ldots,q-1; j=1,\ldots,q-1),\] 
with $\eta_{j}\theta_{i} = \theta_{i}\eta_{j+1}, \theta_{i-1}, \theta_{i+1}, \theta_{i}\eta_j$ according as 
$j\leq i-1$; $j=i$; $j=i+1$; $j\geq i+2$. Bearing in mind that $\eta_l$ and $\eta_0$ commute if $l\geq 2$, then this immediately implies  
\begin{equation}
\label{eqet}
\eta_{j}\mu_{i+1}\eta_{j} = \mu_{i},\mu_{i+2},\mu_{i+1}
\end{equation} 
according as $j=i$, $j=i+1$, or $j\neq i,i+1$. Thus 
\[ N_0=\langle\mu_1,\ldots,\mu_{q}\rangle .\]

Next we establish that $N_{0}$ is isomorphic to the symmetric group $S_{q+1}=\langle\tau_{1},\ldots,\tau_{q}\rangle$, where as before the generators are given by $\tau_{i}:=(i\;q+1)$ for $i=1,\ldots,q$. In particular we show that the map that takes $\mu_i$ to $\tau_i$ for each $i$ defines an isomorphism. As $S_{q+1}$ is abstractly defined by the presentation in (\ref{present}), it suffices to prove that the involutory generators $\mu_1,\ldots,\mu_{q}$ of $N_0$ also satisfy the relations in (\ref{present}). We already know at this point that, since $\mathcal{Q}$ covers $\po_{K_{1,q\,}}$, the particular subgroup $N_0$ of $\Gamma(\mathcal{Q})$ must cover the analogously defined subgroup of $\Gamma(\po_{K_{1,q\,}})$; however, 
this latter subgroup is just $S_{q+1}=\langle\tau_{1},\ldots,\tau_{q}\rangle$.

The key is to exploit the action of $\Gamma_0$ on $N_0$ in conjunction with the structure of the facet of $\mathcal{Q}$ as the $K_{1,q-1}$-graphicahedron. The latter allows us to proceed inductively. In fact, the subgroup $\langle\mu_1,\ldots,\mu_{q-1}\rangle$ of $N_0$ is just the subgroup, analogous to $N_0$, of the facet subgroup $\langle\eta_{0},\ldots,\eta_{q-2}\rangle$ of $\Gamma$. (This is also true when $q=4$, since then $\po_{K_{1,3}} = \{6,3\}_{(2,2)}$ and $\Gamma(\po_{K_{1,3}})= S_{4}\rtimes S_{3}$.) Thus, by induction hypothesis, we can take for granted all relations of  (\ref{present}) that involve only the generators $\mu_1,\ldots,\mu_{q-1}$. Now if a relator of (\ref{present}) involves the last generator $\mu_q$ of $N_0$, we first find a conjugate relator by an element of $\Gamma_0$ that does not contain $\mu_q$, and then simply refer to the validity of the corresponding relation for this conjugate.

The Coxeter type relations $(\mu_{i}\mu_{q})^{3}=\epsilon$ for $i=1,\ldots,q-1$ can be verified as follows. We indicate conjugacy of elements by $\sim$. If $i\leq q-2$ then 
\[ \mu_{i}\mu_{q} \sim 
\eta_{q-1}\mu_{i}\eta_{q-1} \!\cdot\! \eta_{q-1}\mu_{q}\eta_{q-1}
=\mu_{i}\mu_{q-1}, \]
where now the element on the right is known to have order $3$. Similarly, by (\ref{eqet}),
\[ \mu_{q-1}\mu_{q} 
\sim\eta_{q-1}\eta_{q-2}\mu_{q-1}\eta_{q-2}\eta_{q-1} \!\cdot\! \eta_{q-1}\eta_{q-2}\mu_{q}\eta_{q-2}\eta_{q-1} 
\sim\mu_{q-2}\mu_{q-1}, \]
where again the element on the right is known to have order $3$. 

Since the pairwise products of the generators of $N_0$ all have order $3$, the six relators $\mu_{i}\mu_{j}\mu_{k}\mu_{j}$ of the remaining relations that only involve the suffices $i,j,k$ (in some order) turn out to be mutually conjugate, for any distinct $i,j,k$ (see \cite[p. 295]{arp}). Hence we may assume that $i<k<j=q$. Now when $k\leq q-2$ we have 
\[ \mu_{i}\mu_{q}\mu_{k}\mu_{q} 
\sim \eta_{q-1}\,\mu_{i}\mu_{q}\mu_{k}\mu_{q}\,\eta_{q-1} 
= \mu_{i} \mu_{q-1}\mu_{k}\mu_{q-1}; \]
and when $i\leq q-3$ and $k=q-1$ then 
\[ \mu_{i}\mu_{q}\mu_{q-1}\mu_{q} 
\sim \eta_{q-1}\eta_{q-2}\;\mu_{i}\mu_{q}\mu_{q-1}\mu_{q}\;\eta_{q-2}\eta_{q-1}\
= \mu_{i}\mu_{q-1}\mu_{q-2}\mu_{q-1}. \]
Finally, when $i=q-2$ and $k=q-1$ we similarly obtain
\[ \mu_{q-2}\mu_{q}\mu_{q-1}\mu_{q} 
\sim \eta_{q-1}\eta_{q-2}\eta_{q-3}\;\mu_{q-2}\mu_{q}\mu_{q-1}\mu_{q}\;\eta_{q-3}\eta_{q-2}\eta_{q-1}\
= \mu_{q-3}\mu_{q-1}\mu_{q-2}\mu_{q-1}; \]
the proof of this last equation uses the fact that $\eta_{q-1}\eta_{q-2}\eta_{q-3}$ has period $4$ (since the $3$-simplex $\{3,3\}$ has Petrie polygons of length $4$), so that 
\[ \begin{array}{rcl}
\eta_{q-1}\eta_{q-2}\eta_{q-3}\; \mu_{q}\; \eta_{q-3}\eta_{q-2}\eta_{q-1} 
&=& (\eta_{q-1}\eta_{q-2}\eta_{q-3})^{2}\; \mu_{q-3}\; (\eta_{q-3}\eta_{q-2}\eta_{q-1})^{2} \\
&=& (\eta_{q-3}\eta_{q-2}\eta_{q-1})^{2}\; \mu_{q-3}\; (\eta_{q-1}\eta_{q-2}\eta_{q-3})^{2} \\
&=& \eta_{q-3}\eta_{q-2}\eta_{q-1}\eta_{q-3}\; \mu_{q-3}\; \eta_{q-3}\eta_{q-1}\eta_{q-2}\eta_{q-3}\\
&=& \eta_{q-3}\eta_{q-2}\eta_{q-3}\eta_{q-1}\; \mu_{q-3}\; \eta_{q-1}\eta_{q-3}\eta_{q-2}\eta_{q-3}\\
&=& \eta_{q-2}\eta_{q-3}\eta_{q-2}\eta_{q-1}\; \mu_{q-3}\; \eta_{q-1}\eta_{q-2}\eta_{q-3}\eta_{q-2}\\
&=& \eta_{q-2}\eta_{q-3}\; \mu_{q-3}\; \eta_{q-3}\eta_{q-2}\\
&=& \mu_{q-1} .
\end{array} \]
In either case, the conjugate of the original relator $\mu_{i}\mu_{q}\mu_{k}\mu_{q}$ is a relator of the same kind that only involve suffices from among $1,\ldots,q-1$ and hence must have period $2$, as desired. Thus $N_0$ is isomorphic to $S_{q+1}$ under the isomorphism that takes $\mu_i$ to $\tau_i$ for each $i$.

We now apply a simple counting argument to prove that $\mathcal{Q}$ and $\mathcal{P}_{K_{1,q}}$ are isomorphic polytopes. In fact, since $\Gamma = N_{0}\Gamma_{0}$, $N_{0}\cong S_{q+1}$ and $\Gamma_{0}\cong S_{q}$, the order of $\Gamma$ is at most $(q+1)!q!$. On the other hand, the group of $\mathcal{P}_{K_{1,q}}$ has order $(q+1)!q!$ and is a quotient of $\Gamma$. Hence, $\Gamma\cong \Gamma(\mathcal{P}_{K_{1,q}})$, $\Gamma\cong N_{0} \rtimes \Gamma_0$, and $\mathcal{Q}$ is isomorphic to $\mathcal{P}_{K_{1,q}}$. 

Finally, $\Gamma$ is also isomorphic to a direct product $S_{q+1}\times S_q$ (with a new second factor). The proof  follows the same line of argument as the proof for $q=4$ described in \cite[p. 405]{arp}. To this end, we embed the semi-direct product $S_{q+1}\rtimes S_q$ into the symmetric group $S_{2q+1}$ by taking $\tau_{i}=(i\;q+1)$ for $i=1,\dots,q$ and $\omega_{i}=(i\;\,i+1)(q+i+1\;\,q+i+2)$ for $i=1,\dots,q-1$, the latter viewed as acting on $\langle\tau_1,\ldots,\tau_q\rangle$ by conjugation; thus this subgroup of $S_{2q+1}$ is isomorphic to $S_{q+1}\times S_q$. This completes the proof.
\end{proof}

\subsection{The $C_q$-graphicahedron}
\label{cqgraphi}

The main goal of this section is to prove Theorem~\ref{voronoi} below, which says in particular that the $C_q$-graphicahedron can be viewed as a tessellation by $(q-1)$-dimensional permutahedra on the $(q-1)$-torus.

For the $q$-cycle $G=C_q$ we again employ the diagram $L(G)$ rather than $G$ itself. Clearly, as graphs, $G$ and $L(G)$ are isomorphic in this case, but in $L(G)$ the nodes, rather than the edges, are associated with transpositions. As before, the vertices of $G$ are labeled $1,\ldots,q$ (now $p=q$), in cyclic order; the edges of $G$ are labeled $e_1,\ldots,e_q$, with $e_{i}=\{i,i+1\}$ for $i=1,\ldots,q$ (labels are considered modulo $q$);  and each node of $L(G)$ receives the label, $i$, of the edge $e_i$ of $G$ that defines it. Thus the nodes are also labeled $1,\ldots,q$. 

Now the transposition associated with the node $i$ is given by $\tau_{i}:=(i\;i+1)$; as before, this is just $\tau_{e_i}$ in $\mathcal{T}_G$, renamed. Then it is straightforward to verify that $\tau_1,\ldots,\tau_q$ generate $S_{q}$ and also satisfy the Coxeter relations
\begin{equation}
\label{cox}
\tau_{i}^{2} = (\tau_{i}\tau_{i+1})^{3} = (\tau_{i}\tau_{j})^{2} = \epsilon 
\quad (1\leq i,j\leq q;\; j\neq i, i\pm 1;\, \mbox{subscripts}\bmod q), 
\end{equation}
as well as the extra relation
\begin{equation}
\label{extra}
\tau_{1}\tau_{2}\ldots \tau_{q-1}\tau_{q}\tau_{q-1}\ldots \tau_{2}=\epsilon
\end{equation}
(which expresses $\tau_1$ or $\tau_q$ in terms of the other generators and shows redundancy in the generating set). We explain later why the combined relations in (\ref{cox}) and (\ref{extra}) form a complete presentation for the group $S_q$, although we will not actually require this fact.

The key idea of the proof of Theorem~\ref{voronoi} is to exploit the geometry of the infinite Euclidean Coxeter group $\tilde{A}_{q-1}$, whose Coxeter diagram, usually also denoted $\tilde{A}_{q-1}$, is the $q$-cycle $C_q$ (see \cite{cs,RP} and \cite[Ch. 3B]{arp}). Thus $\tilde{A}_{q-1}$ is the group with generators $r_1,\ldots,r_q$ (say) abstractly defined by the Coxeter relations in equation (\ref{cox}); as we will see later, its element 
\[ r_{1}r_{2}\ldots r_{q-1}r_{q}r_{q-1}\ldots r_{2} \]
corresponding to the element on the left side of (\ref{extra}) has infinite order and represents a translation. Each (maximal parabolic) subgroup of $\tilde{A}_{q-1}$ generated by all but one generator $r_i$ is a finite Coxeter group of type $A_{q-1}$ and is isomorphic to $S_q$. Clearly, since the generators $\tau_1,\ldots,\tau_q$ of $S_q$ also satisfy the relations in (\ref{cox}), there exists an epimorphism 
\begin{equation}
\label{maptau}
\pi:  \tilde{A}_{q-1}\rightarrow S_q
\end{equation} 
that maps $r_j$ to $\tau_j$ for each $j$. Moreover, for each $i$, the restriction of $\pi$ to the Coxeter subgroup $\langle r_j\mid j\neq i\rangle$ is an isomorphism onto $S_q$.

As a geometric group, $\tilde{A}_{q-1}$ acts on Euclidean $(q-1)$-space $\mathbb{E}^{q-1}$ as an infinite discrete group of isometries generated by $q$ hyperplane reflections, for simplicity again denoted $r_1,\ldots,r_q$.  The $q$ hyperplane mirrors are the ``walls" bounding a fundamental $(q-1)$-simplex (fundamental chamber) $S$ for $\tilde{A}_{q-1}$ in its action on $\mathbb{E}^{q-1}$. The vertex of $S$ opposite to the reflection mirror of $r_i$ naturally receives the label $i$ in this way. The images of $S$ under $\tilde{A}_{q-1}$ are the $(q-1)$-simplices (chambers) of the Coxeter complex $\mathcal{C}:=\mathcal{C}(\tilde{A}_{q-1})$ of $\tilde{A}_{q-1}$, which is a $(q-1)$-dimensional simplicial complex tiling the entire space $\mathbb{E}^{q-1}$. The Coxeter group $\tilde{A}_{q-1}$ acts simply transitively on the chambers of $\mathcal{C}$; so in particular, each chamber of $\mathcal{C}$ can be identified with an element of $\tilde{A}_{q-1}$ and then labeled, via $\pi$, with a permutation from $S_q$.  Moreover, $\mathcal{C}$ has the structure of a {\em (vertex-) labeled simplicial complex\/}, with a vertex-labeling which is induced by the action of $\tilde{A}_{q-1}$ on $\mathcal{C}$ and is compatible with the vertex labeling on the fundamental simplex $S$ described above; that is, if an element of $\tilde{A}_{q-1}$ maps $S$ to a chamber $S'$, then for each $i$ the vertex of $S$ labeled $i$ is mapped to the vertex of $S'$ labeled~$i$. (Note that the vertex-labeling on $\mathcal{C}$ also induces a labeling of the entire Coxeter complex where a face of $\mathcal{C}$ receives, as its label, the subset of $\{1,\ldots,q\}$ consisting of the labels of its vertices. However, we will not require this labeling.)

For example, for the $3$-cycle $C_3$ (that is, when $q=3$), the corresponding Coxeter group $\tilde{A}_{2}=\langle r_1,r_2,r_3\rangle$ is the Euclidean reflection group in $\mathbb{E}^2$ generated by the reflections in the walls (edges) of an equilateral triangle, $S$, such that $r_i$ is the reflection in the wall of $S$ opposite to the vertex labeled $i$ (see Figures~\ref{A2tilde1} and~\ref{A2tilde2}). Now the corresponding Coxeter complex $\mathcal{C}$ is just the standard regular tessellation of $\mathbb{E}^2$ by equilateral triangles (chambers), and any such triangle can serve as fundamental chamber. Each triangle of $\mathcal{C}$ is marked with a permutation in $S_3$ derived from the epimorphism $\pi: \tilde{A}_{2}\rightarrow S_3$. The mark on a triangle $S'$ of $\mathcal{C}$ is the image under $\pi$ of the unique element of $\tilde{A}_2$ that maps $S$ to $S'$, as illustrated in Figure~\ref{A2tilde2}. In particular, the mark on $S$ is the identity permutation, denoted~$(1)$. Note that the chambers in $\mathcal{C}$ are not regular simplices when $q>3$.
\medskip

\begin{figure}[ht!]
\begin{center}
\scalebox{0.38}{\includegraphics{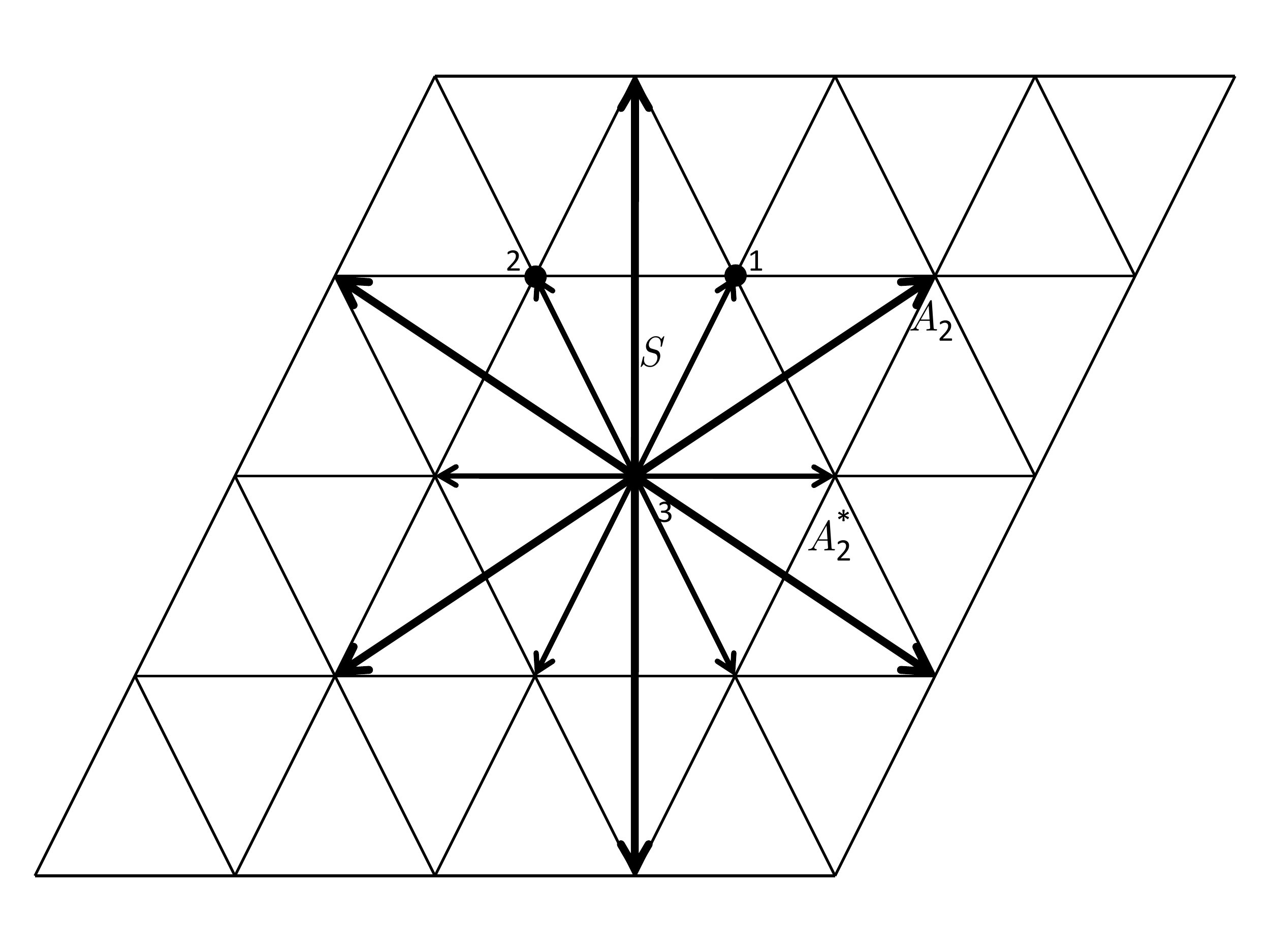}} \\[-.2in]
\caption{\it The case $q=3$, illustrated in $\mathbb{E}^2$. Shown are the Coxeter complex (triangle tessellation) $\mathcal{C}$ for the Coxeter group $\tilde{A}_2=\langle r_{1},r_{2},r_{3}\rangle$, with fundamental triangle $S$, as well as the shortest vectors of the root lattice $A_2$ and its dual lattice $A_{2}^*$. The vertex of $S$ labeled $3$ is taken to be the origin. The generator $r_i$ of $\tilde{A}_2$ is the reflection in the line through the edge (wall) of $S$ opposite the vertex of $S$ labeled $i$, for $i=1,2,3$.} \label{A2tilde1}
\end{center}
\end{figure}

\begin{figure}[ht!]
\begin{center}
\scalebox{0.54}{\includegraphics{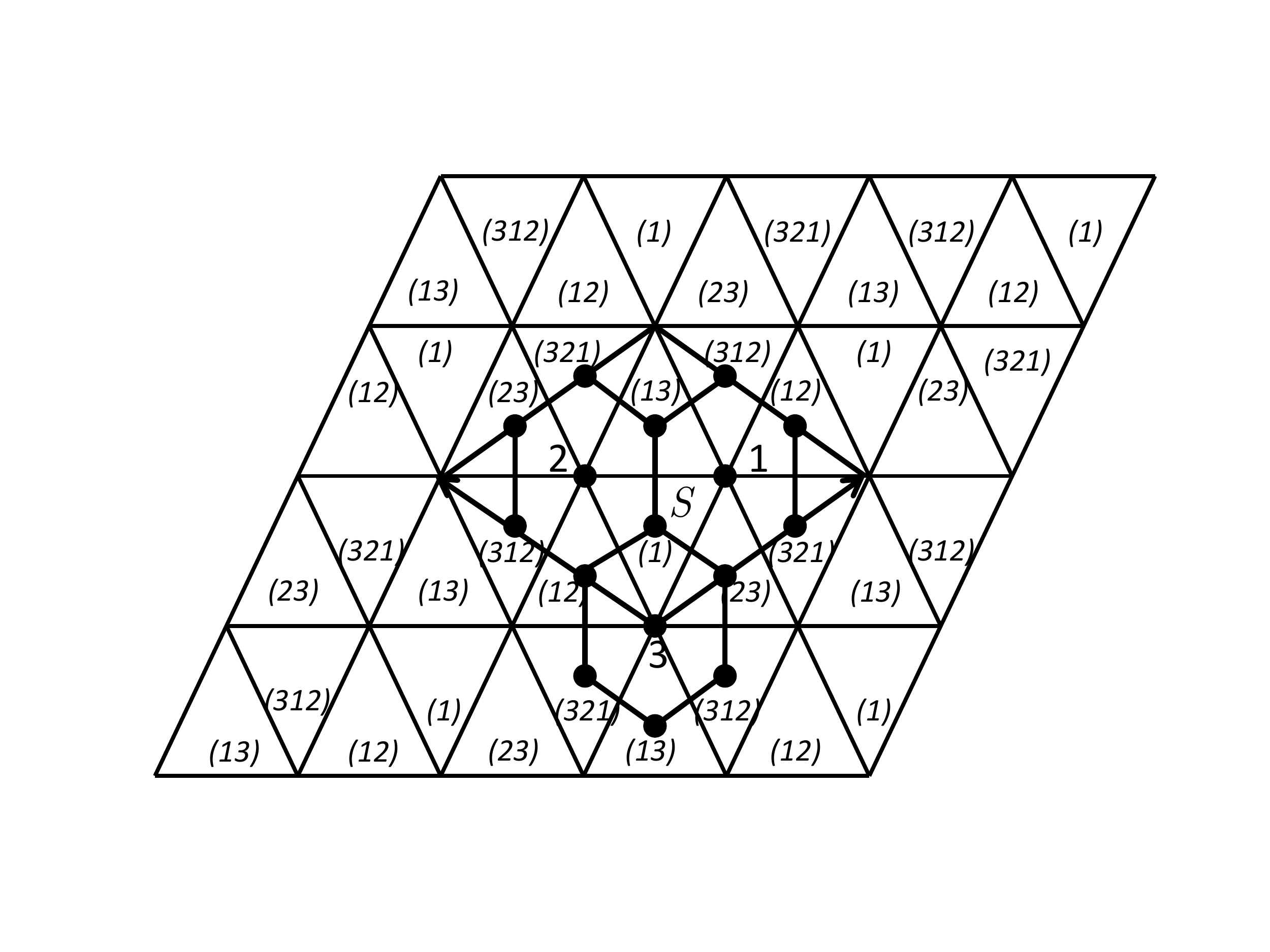}}\\[-.65in]
\caption{\it Shown is the graphicahedron $\mathcal{P}_{C_3}$ for the $3$-cycle $C_3$, viewed as a tessellation of the $2$-torus $\mathbb{E}^{2}/A_{2}$ with three hexagonal ($2$-permutahedral) tiles. The three tiles are the Voronoi regions in $\mathbb{E}^2$ for the lattice $A_{2}^*$, taken at the vertices (labeled $1$, $2$ and $3$) of the fundamental triangle $S$ and considered modulo $A_2$.  The triangles in the Coxeter complex $\mathcal{C}$ of $\tilde{A}_2$ are marked in an $A_2$-invariant manner by permutations in $S_3$. The vertices of the Voronoi tessellation $\mathcal{V}$ for $A_{2}^*$ are the centers of the triangles of $\mathcal{C}$ and can be labeled with the marks of the triangles of which they are the center. In this way, the vertices of the hexagonal tiles of the torus tessellation receive as marks the permutations $\alpha$ of $S_3$ that correspond to them in their representation $(\emptyset,\alpha)$ as vertices of the graphicahedron $\mathcal{P}_{C_3}$.} \label{A2tilde2}
\end{center}
\end{figure}
\medskip

In order to describe the general case it is convenient to let $\tilde{A}_{q-1}$ and its reflection generators $r_1,\ldots,r_q$ act on the $(q-1)$-dimensional linear hyperplane 
\[ E^{q-1}:= \{(x_1,\ldots,x_q) \mid x_{1}+\ldots+x_{q}=0 \} \]
of Euclidean $q$-space $\mathbb{E}^q$. In particular, we take the mirror of $r_i$ to be the intersection of $E^{q-1}$ with the hyperplane of $\mathbb{E}^q$ defined by the equation $x_{i}=x_{i+1}$ if $i<q$, or $x_{1}=x_{q}-1$ if $i=q$ (see 
\cite[p. 456]{cs}). The Coxeter subgroup (of type) $A_{q-1}$ generated by $r_1,\ldots,r_{q-1}$ is the symmetric group $S_q$ (permuting the $q$ axes of $\mathbb{E}^q$ when considered on $\mathbb{E}^q$). This subgroup $A_{q-1}$ naturally has a root system (of type) $A_{q-1}$ associated with it, consisting of the root vectors $\pm (b_{j}-b_{k})$ of $E^{q-1}$ with $1\leq j<k\leq q$, where here $b_1,\ldots,b_q$ denote the canonical basis vectors of $\mathbb{E}^q$; the root vectors are precisely the normal vectors of length $\sqrt{2}$ of the mirrors of the reflections in $A_{q-1}$. The corresponding root lattice (of type) $A_{q-1}$ is the lattice in $E^{q-1}$ spanned by the roots, with a lattice basis given by the (fundamental root) vectors 
\[ a_{i}:=b_{i+1}-b_{i} \;\; (i=1,\ldots,q-1). \] 
If we also set 
\[ a_{q}:=-(a_{1}+\ldots +a_{q-1})=b_{q}-b_{1},\] 
then $a_1,\ldots,a_q$ are the outer normal vectors of the walls of the fundamental region $S$ of the infinite Coxeter group $\tilde{A}_{q-1}$ on $E^{q-1}$. The vertex $v_i$ of $S$ with label $i$ (opposite to the mirror of $r_i$) is given by
\begin{equation}
\label{thevi}
v_{i}:= {\textstyle \frac{i}{q}}(b_{1}+\ldots+b_{q}) -(b_{1}+\ldots+b_{i}) \quad (i=1,\ldots,q) ,
\end{equation}
so in particular, $v_q=o$, the origin.

The infinite group $\tilde{A}_{q-1}$ is obtained from the finite group $A_{q-1}$ by adjoining the translations by root vectors from the root system $A_{q-1}$; more precisely, the translation subgroup of $\tilde{A}_{q-1}$ consists of the translations by vectors of the root lattice $A_{q-1}$. The element $r_{1}r_{2}\ldots r_{q-1}r_{q}r_{q-1}\ldots r_{2}$ of $\tilde{A}_{q-1}$ (corresponding to the left side of (\ref{extra})) is just the translation by $-a_1$; the translation vectors of its conjugates by elements of $A_{q-1}$ generate the root lattice.

The dual lattice $A_{q-1}^*$ of the root lattice $A_{q-1}$ consists of the vectors $v$ in $E^{q-1}$ whose inner products $\langle u,v\rangle$ with the vectors $u$ in the root lattice $A_{q-1}$ are integers (see Figure~\ref{A2tilde1} for an illustration of the case $q=3$).  A lattice basis of $A_{q-1}^*$ is given by the vectors
\[ a_{1},\,a_{1}+a_{2},\,\ldots,\,a_{1}+\! \ldots\! +a_{q-1},\, v_{1}, \]
with $v_1$ as in (\ref{thevi}) above (see \cite[p. 115]{cs}); an alternative basis is $a_{1},\ldots,a_{q-1},v_{1}$. The original lattice $A_{q-1}$ is a sublattice of $A_{q-1}^*$ of index $q$. 

For us it is important to note that the vectors in $A_{q-1}^*$ are just the vertices of the Coxeter complex $\mathcal{C}$ of the infinite Coxeter group $\tilde{A}_{q-1}$. In fact, the vectors  $v_1,\ldots,v_{q-1}$, which are vertices of the fundamental chamber $S$, form the lattice basis of $A_{q-1}^*$ dual to the basis $a_1,\ldots,a_{q-1}$ of the root lattice $A_{q-1}$; note here that when  $i,j\leq q-1$ we have $\langle v_{i},a_{j}\rangle =1$ or $0$ according as $j=i$ or $j\neq i$ (recall that $a_1,\ldots,a_{q-1}$ are the outer normal vectors of the walls of $S$ that contain $o$). Now, since the dual lattice $A_{q-1}^*$ is invariant under the Coxeter group $\tilde{A}_{q-1}$ and this group acts transitively on the chambers of $\mathcal{C}$, it follows in particular that all the vertices of $\mathcal{C}$ must belong to $A_{q-1}^*$. Conversely, suppose there is a vector $v$ of $A_{q-1}^*$ which is not a vertex of $\mathcal{C}$. By replacing $v$ by its image under an element of $\tilde{A}_{q-1}$ (if need be), we may assume that $v$ lies in $S$ but is not a vertex of $S$; in particular, $v\neq o$. But then there must be $q-2$ vectors $v_{j_1},\ldots,v_{j_{q-2}}$ (say) among $v_1,\ldots,v_{q-1}$ which, together with $v$, span a sublattice of $A_{q-1}^*$ whose determinant is smaller than the determinant of $A_{q-1}^*$ (and hence has a fundamental parallelepiped with smaller volume than the fundamental parallelepiped for $A_{q-1}^*$). However, this is impossible, since a sublattice of a lattice cannot have a smaller determinant than the lattice itself. Thus the dual lattice $A_{q-1}^*$ is just the vertex-set of the Coxeter complex $\mathcal{C}$ of the Coxeter group $\tilde{A}_{q-1}$.

The Voronoi cells (regions) of the dual root lattice $A_{q-1}^*$ are the tiles in the Voronoi tessellation $\mathcal{V}=\mathcal{V}(A_{q-1}^*)$ of $E^{q-1}$ for $A_{q-1}^*$ and are known to be $(q-1)$-dimensional permutahedra; see \cite[p. 115]{cs}. (Recall that the Voronoi cell associated with a point $u$ of a lattice consists of all points of the ambient space whose distance from $u$ is shorter than, or equal to, the distance from any other point in the lattice. The Voronoi cells are the tiles in the Voronoi tessellation of the lattice.)  This tessellation $\mathcal{V}$ is invariant under $\tilde{A}_{q-1}$ (since $A_{q-1}^*$ itself is invariant), and in particular, under the translations by the vectors from the root lattice $A_{q-1}$. Moreover, the lattice $A_{q-1}^*$ acts tile-transitively on $\mathcal{V}$ by translations. The $q$ permutahedra centered at the vertices of a chamber of $\mathcal{C}$ share the center of this chamber as a common vertex, and are the only tiles of $\mathcal{V}$ that meet at this vertex. The vertex-set of $\mathcal{V}$ is precisely the set of centers of chambers of $\mathcal{C}$, and the vertex-figures of $\mathcal{V}$ at its vertices are $(q-1)$-simplices. Thus $\mathcal{V}$ is a tiling of $E^{q-1}$ by $(q-1)$-dimensional permutahedra, each labeled with the label (from among $1,\ldots,q$) of the vertex of $\mathcal{C}$ that is its center, and each with its vertices labeled with the labels (from $S_q$, via $\pi$) of the chambers of $\mathcal{C}$ of which they are the center. 

The following theorem describes the structure of the graphicahedron for $C_q$. See Figure~\ref{A2tilde2} for an illustration of the case $q=3$, when $\mathcal{P}_{C_3}$ is just the regular toroidal map $\{6,3\}_{(1,1)}$ (see \cite{graphi}). When $q\geq 4$ the graphicahedron $\mathcal{P}_{C_q}$ is not a regular polytope.

\begin{theorem}
\label{voronoi}
Let $q\geq 3$. Suppose the Coxeter group $\tilde{A}_{q-1}$, the root lattice $A_{q-1}$, the dual root lattice $A_{q-1}^*$, and the Voronoi tiling $\mathcal{V}=\mathcal{V}(A_{q-1}^*)$ for $A_{q-1}^*$ in the Euclidean $(q-1)$-space $E^{q-1}$ are defined as described above. Then the quotient tessellation $\mathcal{V}/A_{q-1}$ of $\mathcal{V}$ by $A_{q-1}$ is a face-to-face tessellation of the $(q-1)$-torus $E^{q-1}/A_{q-1}$ by $(q-1)$-dimensional permutahedra, with $q$ such tiles, and the face poset of $\mathcal{V}/A_{q-1}$ is isomorphic to the $C_q$-graphicahedron~$\mathcal{P}_{C_q}$.
\end{theorem}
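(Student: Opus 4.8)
The plan is to construct an explicit poset isomorphism between the face poset of the quotient tessellation $\mathcal{V}/A_{q-1}$ and $\mathcal{P}_{C_q}$, using the labeled Coxeter complex $\mathcal{C}$ as the bridge. Recall that $\mathcal{P}_{C_q}$ has vertices $(\emptyset,\alpha)$ with $\alpha\in S_q$, and that faces of rank $j$ are classes $(K,\alpha)$ with $K$ a $j$-subgraph of $C_q$ (equivalently, a $j$-element subset of the edge set $E(C_q)=\{e_1,\dots,e_q\}$) and $T_K\alpha$ a right coset. On the geometric side, the vertices of $\mathcal{V}$ are exactly the centers of chambers of $\mathcal{C}$, hence are naturally labeled by elements of $S_q$ via the epimorphism $\pi\colon\tilde A_{q-1}\to S_q$; passing to the quotient by the translation lattice $A_{q-1}$ identifies two vertices precisely when their $\tilde A_{q-1}$-labels differ by an element of the translation subgroup, i.e. precisely when they carry the same $S_q$-label (since $\pi$ restricted to any fundamental-chamber stabilizer complement — any maximal parabolic — is an isomorphism and $\ker\pi$ is exactly the translation lattice $A_{q-1}$). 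So the vertex set of $\mathcal{V}/A_{q-1}$ is canonically $S_q$, matching the vertices $(\emptyset,\alpha)$ of $\mathcal{P}_{C_q}$.

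First I would pin down the combinatorics of a single Voronoi tile. The tile centered at the vertex $v_i$ of the fundamental chamber $S$ is a $(q-1)$-permutahedron, and its vertices are the centers of the $q$ chambers of $\mathcal{C}$ containing $v_i$ — hence labeled by a coset of the subgroup $\langle\tau_j : j\neq i\rangle$ of $S_q$ generated by all transpositions $\tau_j=(j\;j+1)$ except $\tau_i=\tau_{e_i}$. That subgroup is exactly $T_{K}$ for $K=E(C_q)\setminus\{e_i\}$, the $(q-1)$-subgraph obtained by deleting edge $e_i$. More generally, a face of $\mathcal{V}$ of dimension $q-1-j$ dual to a $j$-face $\sigma$ of $\mathcal{C}$ has as its vertex set the centers of the chambers containing $\sigma$; if $\sigma$ has vertex-label set $\{i_1,\dots,i_j\}\subseteq\{1,\dots,q\}$ then those chambers form a coset of the parabolic $\langle\tau_\ell : \ell\notin\{i_1,\dots,i_j\}\rangle$. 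I would therefore set up the bijection on faces: a $(q-1-j)$-face of $\mathcal{V}$ incident to the chamber labeled $\alpha$, dual to a simplex with label set complementary to $\{\ell_1,\dots,\ell_{q-j}\}$, corresponds to the rank-$j$ face $(K,\alpha)$ of $\mathcal{P}_{C_q}$ with $K=\{e_{\ell_1},\dots,e_{e_{q-j}}\}^c$ — i.e. the map sends a codimension-$j$ face of $\mathcal{V}$ to a rank-$j$ face, reversing nothing in the rank but organizing it by which edges of $C_q$ are ``present.'' Passing to the quotient, two such faces coincide in $\mathcal{V}/A_{q-1}$ iff their edge sets agree and their $\alpha$-labels agree modulo $T_K$, which is exactly the equivalence $\sim$ defining $\mathcal{P}_{C_q}$. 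Checking that this map is a well-defined bijection compatible with incidence (hence a poset isomorphism) then follows because both structures are governed by the same inclusion relations among parabolic subgroups of $S_q$.

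The steps in order: (1) identify the vertices of $\mathcal{V}$ with chambers of $\mathcal{C}$, hence with $\tilde A_{q-1}$, hence (via $\pi$) label them by $S_q$; (2) show $\ker\pi$ equals the translation subgroup generated by $A_{q-1}$, so that $\mathcal{V}/A_{q-1}$ has vertex set $S_q$ and exactly $q$ tiles (one per vertex orbit $v_1,\dots,v_q$ of the fundamental chamber under $A_{q-1}$, since $A_{q-1}$ is index $q$ in $A_{q-1}^*$); (3) compute, for each face of $\mathcal{V}$, the parabolic subgroup of $S_q$ through which its vertices are labeled, matching it with $T_K$ for the appropriate subgraph $K\subseteq E(C_q)$; (4) define the face map to $\mathcal{P}_{C_q}$ and verify it descends to the quotient, is injective, surjective, and order-preserving in both directions; (5) conclude the tessellation is face-to-face on the torus $E^{q-1}/A_{q-1}$ and that the resulting face poset is $\mathcal{P}_{C_q}$. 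I expect the main obstacle to be step (3) together with the bookkeeping in step (4): one must correctly translate between ``vertex-label sets of simplices in $\mathcal{C}$'' (subsets of $\{1,\dots,q\}$, i.e. node labels of $L(C_q)$) and ``edge sets of subgraphs of $C_q$'' in a way that is consistent with the conventions $e_i=\{i,i+1\}$ and $\tau_i=\tau_{e_i}$, and then verify that incidence of faces in the permutahedral tiling corresponds exactly to the coset-inclusion condition $K\subseteq L$ and $T_K\alpha\subseteq T_L\alpha$ — here one uses that a maximal parabolic $\langle\tau_j:j\neq i\rangle$ of $S_q=\langle\tau_1,\dots,\tau_q\rangle/(\text{extra relation})$ really is isomorphic to $S_q$ under $\pi$ as noted after~\eqref{maptau}, so that no accidental coincidences of cosets occur and the quotient tessellation does not get ``folded'' more than intended. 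The Voronoi-cell description of the permutahedron from \cite{cs} and the already-established fact that the vertices of $\mathcal{C}$ are precisely $A_{q-1}^*$ make the geometry rigid enough that, once the labeling dictionary is fixed, the poset isomorphism is forced.
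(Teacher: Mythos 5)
Your outline is correct and uses the same bridge as the paper --- the vertex-labeled Coxeter complex $\mathcal{C}$ of $\tilde{A}_{q-1}$ and the epimorphism $\pi$ of (\ref{maptau}) --- but it organizes the isomorphism differently. The paper defines the map to $\mathcal{P}_{C_q}$ \emph{locally}, tile by tile (each tile $P_u$ being identified with the graphicahedron of the path $C_q^{i_u}$), then spends most of its effort showing these local isomorphisms agree on overlaps: for a face common to several tiles it proves, using the intersection property of the parabolic subgroups of $\tilde{A}_{q-1}$ and the irreducible action of an edge-stabilizer on the perpendicular bisector of $u$ and $v$, that the edge-set component $K$ is forced to be the complement of the relevant label set; finally it upgrades the induced quotient epimorphism to an isomorphism by counting flags ($(q!)^2$ on both sides). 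You instead propose a \emph{global} definition via the Voronoi--Delone duality between $\mathcal{V}$ and $\mathcal{C}$ --- every face of $\mathcal{V}$ is the intersection of the tiles centered at the vertices of a simplex of $\mathcal{C}$, its vertex set is a coset of the parabolic $\langle \tau_\ell \mid \ell\notin I\rangle = T_K$ with $K$ the complement of the label set $I$ --- and you replace the flag count by the observation that $\ker\pi$ is exactly the translation lattice $A_{q-1}$ (true: the extra relation (\ref{extra}) kills the translation by $-a_1$, whose conjugates generate the lattice, while $\pi$ is injective on the point group). What your route buys is a cleaner, single formula for the map and a structural rather than numerical reason for bijectivity; what it costs is that the duality statement you invoke in step (3) is precisely the content of the paper's hardest step, not mere bookkeeping --- you still have to prove that the face of $\mathcal{V}$ dual to a simplex with label set $I$ has vertex set equal to a full coset of $\langle r_\ell\mid \ell\notin I\rangle$ and nothing more, which is where the irreducibility and intersection-property arguments actually enter.

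Two concrete corrections before this becomes a proof. First, your general dictionary is garbled: you write that a face dual to a simplex with label set complementary to $\{\ell_1,\dots,\ell_{q-j}\}$ maps to $(K,\alpha)$ with $K=\{e_{\ell_1},\dots,e_{\ell_{q-j}}\}^c$, which makes $K$ consist of the edges indexed \emph{by} the label set; your own correct computation for a single tile ($K=E(C_q)\setminus\{e_i\}$ for the tile centered at $v_i$) shows $K$ must be the complement of the label set, and the rank of the image must equal the dimension (not the codimension) of the face of $\mathcal{V}$, or else the map reverses order. Second, ``identified in the quotient iff same $S_q$-label'' handles vertices, but for higher faces you must still check that two $A_{q-1}$-translates of faces, and only those, map to the same pair $(K,T_K\alpha)$; this is where the $A_{q-1}$-invariance of the vertex labeling of $\mathcal{C}$ and the index-$q$ computation are used, exactly as in the paper.
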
 
 
\begin{proof}
Since the tessellation $\mathcal{V}$ in $E^{q-1}$ by permutahedra is invariant under the translations by the vectors from the root lattice $A_{q-1}$, it is immediately clear that the quotient $\mathcal{V}/A_{q-1}$ is a face-to-face tessellation of the $(q-1)$-torus $E^{q-1}/A_{q-1}$, with tiles that are again permutahedra. Since $A_{q-1}^*$ contains $A_{q-1}$ as a sublattice of index $q$ and the tiles of $\mathcal{V}$ are centered at the points of $A_{q-1}^*$, the tessellation $\mathcal{V}/A_{q-1}$ on $E^{q-1}/A_{q-1}$ has exactly $q$ tiles, centered (in a sense) at the points of $A_{q-1}^{*}/A_{q-1}$. This completes the first part of the theorem.

Note that the number of flags of $\mathcal{V}/A_{q-1}$ is $(q!)^2$, which is also the number of flags of $\mathcal{P}_{C_q}$. In fact, there are $q$ tiles in $\mathcal{V}/A_{q-1}$, each a permutahedron with $q!$ vertices; moreover, each vertex of a tile has a vertex-figure which is a $(q-2)$-simplex. This gives a total of $q\cdot (q!)\cdot (q-1)! = (q!)^2$ flags. 

For the second part of the theorem we employ the epimorphism $\pi:\tilde{A}_{q-1}\rightarrow S_q$ of (\ref{maptau}) to construct an isomorphism between the face posets of $\mathcal{V}/A_{q-1}$ and $\mathcal{P}_{C_q}$. We first describe  an epimorphism (incidence preserving surjective mapping) from the face poset of $\mathcal{V}$ to the face poset of $\mathcal{P}_{C_q}$ which maps any two faces of $\mathcal{V}$ equivalent under the root lattice $A_{q-1}$ to the same face of $\mathcal{P}_{C_q}$, and which is a local isomorphism, meaning that it maps the face poset of every tile of $\mathcal{V}$ isomorphically to the face poset of a facet of $\mathcal{P}_{C_q}$. 

To this end, recall that every vector $u$ of $A_{q-1}^*$ is the center of a tile $P_u$ of $\mathcal{V}$; and vice versa, that all tiles of $\mathcal{V}$ are obtained in this way from vectors in $A_{q-1}^*$. As a vertex of the Coxeter complex $\mathcal{C}$ of $\tilde{A}_{q-1}$, the vector $u$ naturally carries a label $i_u$ (say) from among $1,\ldots,q$. If $u$ happens to be a vertex of the fundamental chamber $S$ of $\mathcal{C}$, then the vertex-stabilizer of $u$ in the Coxeter complex is the finite Coxeter subgroup $\langle r_j\mid j\neq i_u\rangle$ of type $A_{q-1}$ isomorphic to $\langle \tau_j\mid j\neq i_u\rangle = \langle \tau_{e_j}\mid j\neq i_u\rangle = S_q$. Thus in this case the permutahedron $P_u$ is naturally associated with this Coxeter subgroup; in fact, $P_u$ is the graphicahedron for this subgroup, or rather for the path $C_q^{i_u}$ of length $q-1$ obtained from $C_q$ by deleting the edge $e_{i_u}$ from $C_q$. If $u$ is an arbitrary vertex of $\mathcal{C}$, then $u$ is equivalent under $\tilde{A}_{q-1}$ to a unique vertex of $S$, namely the vertex $u'$ (say) of $S$ with the same label, $i_u$. In this case $P_u$ still is associated with the Coxeter subgroup $\langle r_j\mid j\neq i_u\rangle$ of $\tilde{A}_{q-1}$ (or rather a subgroup of $\tilde{A}_{q-1}$ conjugate to it); hence, since $i_{u}=i_{u'}$ and $P_u$ is equivalent under $\tilde{A}_{q-1}$ to $P_{u'}$, the tile 
$P_u$ still is the graphicahedron for $C_q^{i_u}$.   

The construction of the epimorphism from the face poset of $\mathcal{V}$ to the face poset of $\mathcal{P}_{C_q}$ proceeds in two steps:\ we first define its effect locally on the face lattices of the tiles $P_u$, and then we establish that these ``local isomorphisms" fit together globally to give the desired epimorphism. 

To begin with, consider a tile $P_u$ of $\mathcal{V}$ for some $u$ in $A_{q-1}^*$. Identifying the three groups 
\[ \langle r_j\mid j\neq i_u\rangle,\;\; 
\langle \tau_{j}\mid j\neq i_u\rangle,\;\;
\langle \tau_{e_j}\mid j\neq i_u\rangle\] 
with each other in an obvious way, we can express (up to equivalence under $A_{q-1}^*$) a typical face of $P_u$ in the form $(K,\alpha)$, where $K\subseteq \{j\mid j\neq i_u\}$ and $\alpha\in\langle r_j\mid j\neq i_u\rangle$. If $(K,\alpha)$ and $(L,\beta)$ are two faces of $P_u$ with $K,L\subseteq \{j\mid j\neq i_u\}$ and $\alpha,\beta\in\langle r_j\mid j\neq i_u\rangle$, then $(K,\alpha)\leq (L,\beta)$ in $P_u$ if and only if $K\subseteq L$ and $T_{K}\alpha=T_{L}\beta$ (with $T_{K}=\langle r_j \mid j \in K\rangle$ and $T_{L}=\langle r_j \mid j \in L\rangle$). We now employ the group epimorphism $\pi:\tilde{A}_{q-1}\rightarrow S_q$ of (\ref{maptau}), or more exactly, its restriction to the subgroup $\langle r_j\mid j\neq i_u\rangle$. Consider the mapping from the face poset of $P_u$ to the face poset of $\mathcal{P}_{C_q}$ defined by 
\begin{equation}
\label{lociso}
(K,\alpha) \rightarrow (\widehat{K},\pi(\alpha)) 
\;\quad (K\subseteq \{j\mid j\neq i_u\},\, \alpha\in\langle r_j\mid j\neq i_u\rangle),
\end{equation}
where 
\[ \widehat{K}:=\{e_k\mid k\in K\} .\] 
Now, if 
\[ (K,\alpha)\leq (L,\beta) \] 
in $P_u$, then $K\subseteq L$ and $T_{K}\alpha=T_{L}\beta$, so also $\widehat{K}\subseteq \widehat{L}$ and $T_{\widehat{K}}\,\pi(\alpha)=T_{\widehat{L}}\,\pi(\beta)$ (here with $T_{\widehat{K}}=\langle \tau_{e} \mid e \in \widehat{K}\rangle$ and $T_{\widehat{L}}=\langle \tau_{e} \mid e \in \widehat{L}\rangle$) and therefore 
\[ (\widehat{K},\pi(\alpha))\leq (\widehat{L},\pi(\beta)) \] 
in $\mathcal{P}_{C_q}$. Thus the mapping in (\ref{lociso}) preserves incidence. Moreover, its image in $\mathcal{P}_{C_q}$ consists of all faces of $\mathcal{P}_{C_q}$ of the form $(M,\gamma)$ with 
\[ M\subseteq \{e\mid e\neq e_{i_u}\}=E(C_q^{i_u}), \]
the edge set of $C_q^{i_u}$, and $\gamma\in\langle\tau_e \mid e\neq e_{i_u}\rangle$; hence this image is just the face poset of the permutahedral facet $\mathcal{P}_{C_q^{i_u}}$ of $\mathcal{P}_{C_q}$. In particular, since $\pi$ induces an isomorphism between $\langle r_j\mid j\neq i_u\rangle$ and $\langle \tau_e\mid e\neq e_{i_u}\rangle$, the resulting mapping between the face posets of $P_u$ and $\mathcal{P}_{C_q^{i_u}}$ must actually be an isomorphism.

It remains to show that these local isomorphisms fit together coherently. First recall that the vertices of $\mathcal{V}$ lie at the centers of the chambers of $\mathcal{C}$ and hence can be identified with the elements of the Coxeter group $\tilde{A}_{q-1}$ that are associated with their respective chamber. Now suppose we have a common (non-empty) face of the tiles $P_u$ and $P_v$ of $\mathcal{V}$, where again $u,v\in A_{q-1}^*$. Then $u$ and $v$ must necessarily be adjacent vertices of a chamber of $\mathcal{C}$, and since $\tilde{A}_{q-1}$ acts transitively on the chambers, we may assume that this is the fundamental chamber $S$ of $\mathcal{C}$. The common face of $P_u$ and $P_v$ has two representations, namely as $(K_u,\alpha_u)$, with 
\[ K_u\subseteq \{j\mid j\neq i_u\},\; \alpha_u\in\langle r_j\mid j\neq i_u\rangle ,\]
and as $(K_v,\alpha_v)$, with 
\[ K_v\subseteq \{j\mid j\neq i_v\},\; \alpha_v\in\langle r_j\mid j\neq i_v\rangle. \] 
Our goal is to show that this forces $K_{u}=K_v$ and also allows us to assume that $\alpha_{u}=\alpha_v$. Then clearly this would imply $\widehat{K}_{u}=\widehat{K}_v$ and $\pi(\alpha_u)=\pi(\alpha_v)$, so the local definition of the mapping in (\ref{lociso}) would not depend on the representation of a face of $\mathcal{V}$ as a face of $P(u)$ or $P(v)$, as desired. Note that we already know at this point that $K_{u}$ and $K_v$ have the same cardinality, which is just the dimension (rank) of the face.

First observe that, if need be, we may replace $\alpha_u$ and $\alpha_v$ by any suitable group element for which the vertices $(\emptyset,\alpha_u)$ of $P_u$ and $(\emptyset,\alpha_v)$ of $P_v$ are vertices of the common face $(K_u,\alpha_u)$ of $P_u$ and $(K_v,\alpha_v)$ of $P_v$, respectively. In $E^{q-1}$, these vertices are just the centers of the chambers corresponding to the elements $\alpha_u$ and $\alpha_v$ of $\tilde{A}_{q-1}$. Hence, by choosing the same vertex in the two representations of the common face, we may, and will, assume that $\alpha_u=\alpha_v=:\alpha$. Then also 
\[ \alpha \in \langle r_j\mid j\neq i_u\rangle \cap \langle r_j\mid j\neq i_v\rangle
= \langle r_j\mid j\neq i_u,i_v\rangle, \]
by the intersection property for the distinguished (parabolic) subgroups of $\tilde{A}_{q-1}$ (see \cite[Ch. 3A]{arp}). 

Suppose for a moment that the common face is actually a facet of $P_u$ and $P_v$ in $\mathcal{V}$, so that $P_u$ and $P_v$ are adjacent tiles. Then this facet, and in particular its vertex identified with $\alpha$, must lie in the perpendicular bisector of $u$ and $v$ in $E^{q-1}$. The stabilizer of the edge $\{u,v\}$ of the fundamental chamber $S$ of $\mathcal{C}$ in $\tilde{A}_{q-1}$ is just the subgroup $\langle r_j\mid j \neq i_u,i_v\rangle$ of $\tilde{A}_{q-1}$, which acts irreducibly on the perpendicular bisector. Now 
\[ (\{j\mid j\neq i_u,i_v\},\,\alpha) \] 
is a common facet of $P_u$ and $P_v$ (when viewed in either representation of faces). On the other hand, the vertex-set of this facet must necessarily coincide with the orbit of the vertex $\alpha$ under the edge stabilizer subgroup $\langle r_j\mid j \neq i_u,i_v\rangle$. Thus
\[ K_{u}=K_{v}=\{j \mid j\neq i_u,i_v\} . \] 
This settles the case when the common face is a facet.

Now, to deal with the general case recall that any two tiles of $\mathcal{V}$ centered at vertices of $S$ are adjacent in $\mathcal{V}$. Hence, if a face of dimension $d$ (say) is common to both tiles $P_u$ and $P_v$, then this face is the intersection of precisely $q-d$ (mutually adjacent) tiles $P_{u_1},\ldots,P_{u_{q-d}}$ (say), where 
\[ u=u_1,u_2,\ldots,u_{q-d-1},u_{q-d}=v\] 
are vertices of the same chamber. This gives $q-d$ representations $(K_{u_j}, \alpha_{u_j})$ of this common face, one for each $j$. Now comparing any two such representations as above, we first observe that we may choose the same element $\alpha$ as $\alpha_{u_j}$ for each $j$; in particular, 
\[ \alpha \in \langle r_j\mid j\neq i_{u_{1}},\dots,i_{u_{q-d}}\rangle, \] 
again by the intersection property of $\tilde{A}_{q-1}$. Further, by viewing $(K_{u_k}, \alpha_{u_k})=(K_{u_k}, \alpha)$ as a face of a facet $(L_{u_k},\alpha)$ of $P_{u_k}$ which is shared by another tile $P_{u_l}$ (say) among $P_{u_1},\ldots,P_{u_{q-d}}$, we find that 
\[K_{u_k},K_{u_l} \subseteq L_{u_k}=L_{u_l}=\{j \mid j\neq i_{u_k},i_{u_l}\} .\]
It follows that 
\[ K_{u_k}\subseteq \{j \mid j\neq i_{u_1},\ldots,i_{u_{q-d}}\} .\] 
Finally, since $K_{u_k}$ has cardinality~$d$, we must actually have 
\[ K_{u_k}=\{j \mid j\neq i_{u_1},\ldots,i_{u_{q-d}}\}, \] 
independent of $k$. Thus $K_u=K_v$. 

At this point we have a homomorphism from the face poset of $\mathcal{V}$ to the face poset of $\mathcal{P}_{C_q}$ that is defined by (\ref{lociso}). Clearly, this is an epimorphism, since the face poset of every facet of $\mathcal{P}_{C_q}$ is the image of the face poset of a suitable tile of $\mathcal{V}$ (every label from $1,\ldots,q$ occurs as a label of a vertex of $\mathcal{C}$). Moreover, this epimorphism is invariant under the translations by vectors from the root lattice $A_{q-1}$, as these translations lie in $\tilde{A}_{q-1}$ and the action of $\tilde{A}_{q-1}$ on $\mathcal{C}$ preserves the labels of vertices of $\mathcal{C}$.

Finally, then, by the invariance under $A_{q-1}$, the above epimorphism from the face poset of $\mathcal{V}$ to the face poset of $\mathcal{P}_{C_q}$ also determines an epimorphism from the face poset of $\mathcal{V}/A_{q-1}$ to the face poset of $\mathcal{P}_{C_q}$ that is again a local isomorphism. Under this induced epimorphism, the equivalence class of a face of $\mathcal{V}$ modulo $A_{q-1}$ is mapped to the image of any representative of this equivalence class under the original epimophism. On the other hand, since the face posets of $\mathcal{V}/A_{q-1}$ and $\mathcal{P}_{C_q}$ have the same number of flags, this induced epimorphism must actually be an isomorphism 
between the face posets of $\mathcal{V}/A_{q-1}$ and $\mathcal{P}_{C_q}$. This completes the proof.
\end{proof} 

Theorem~\ref{voronoi} rests on the geometry of the Euclidean Coxeter group $\tilde{A}_{q-1}$ and on the observation that the symmetric group $S_q$, via (\ref{maptau}), is an epimorphic image of $\tilde{A}_{q-1}$. This quotient relationship between $\tilde{A}_{q-1}$ and $S_q$ can be seen as a degenerate case, obtained when $s=1$, among similar quotient relationships between $\tilde{A}_{q-1}$ and the groups in a certain family of unitary reflection groups. In fact, when the single extra relation
\begin{equation}
\label{extrarelins}
(r_{1}r_{2}\ldots r_{q-1}r_{q}r_{q-1}\ldots r_{2})^{s}=1,
\end{equation}
for any $s\geq 2$, is added to the Coxeter relations of $\tilde{A}_{q-1}$, we obtain a presentation for the finite unitary group $W_{q}(s):=[1\;1\;\ldots 1]^s$ (with $q$ entries $1$) generated by involutory unitary reflections in unitary complex $q$-space (see \cite[9A17]{arp}). Following \cite{coxunitary}, a convenient circular diagram representation for $W_{q}(s)$ is obtained by adding a mark $s$ inside the standard circular Coxeter diagram for $\tilde{A}_{q-1}$ given by $C_q$ (see \cite[Figure 9A4]{arp}).  As explained in \cite[9A16, 9A17]{arp}, the same group $W_{q}(s)$ also has an alternative representation (based on new generators $r'_1,\ldots,r'_q$) by a diagram consisting of a triangle with a tail, in which the triangle, with nodes $1,2,3$ (say), has a mark $3$ inside and the branch $\{1,2\}$ opposite the tail is marked~$s$. This new diagram encodes an alternative presentation for $W_{q}(s)$ consisting of the standard Coxeter relations for the (new) underlying Coxeter diagram and the single extra relation 
\[ (r'_{1}r'_{2}r'_{3}r'_{2})^{3} = 1 . \]
(Of course, we could avoid complex reflection groups altogether by directly rewriting $\tilde{A}_{q-1}$ as a group represented by a triangle with a tail, now with a mark $\infty$ on branch $\{1,2\}$.) Now allow the degenerate case $s=1$. Then the same change of generators that transformed the circular diagram of $W_{q}(s)$ to the diagram given by a triangle with a tail works in this case as well. On the other hand, when $s=1$ the mark $s$ on the branch $\{1,2\}$ identifies the generators $s_1$ and $s_2$ (there is no impact from the mark inside the triangle or from other Coxeter relations), to give the desired conclusion that 
\[ W_{q}(1)= \langle r'_{2},\ldots,r'_{q}\rangle = S_q .\]
Thus the circular diagram with a mark $1$ inside also represents the symmetric group $S_q$. 

Now, since the original symmetric group $S_q$ with its generating transpositions $\tau_1,\ldots,\tau_q$ also satisfies the relations represented by the circular diagram with a mark $1$ inside (that is, relations (\ref{cox}) and (\ref{extra})), it must actually be isomorphic to the group abstractly defined by these relations. Thus (\ref{cox}) and (\ref{extrarelins}), with $s=1$, abstractly define $S_q$. 
\bigskip

Concluding we remark that the graphicahedron $\mathcal{P}_{C_4}$ (of rank $3$) for the $4$-cycle $C_4$ also permits an alternative representation based on a different description of the corresponding Coxeter group $\tilde{A}_3$. In fact, the $3$-dimensional Coxeter group $\tilde{A}_3$ can also be viewed as a subgroup of index $4$ in the symmetry group $[4,3,4]$ of a regular cubical tessellation $\{4,3,4\}$ in $\mathbb{E}^3$, which also is a discrete reflection (Coxeter) group in $\mathbb{E}^3$. Recall that the Coxeter complex for $[4,3,4]$ is built from the barycentric subdivisions of the cubical tiles of $\{4,3,4\}$. In this Coxeter complex, the four tetrahedra that surround a common edge of type $\{1,2\}$ fit together to form a larger tetrahedron. The four reflections in the walls of any such larger  tetrahedron $S$ (say) can be seen to generate a discrete reflection group in $\mathbb{E}^3$ with $S$ as its fundamental tetrahedron. This group is a representation of $\tilde{A}_3$ as a reflection group on $\mathbb{E}^3$. Its index as a subgroup of $[4,3,4]$ is $4$, since $S$ is dissected into $4$ smaller tetrahedra of the Coxeter complex for $[4,3,4]$. For figures illustrating the structure of the graphicahedron $\mathcal{P}_{C_4}$ as a tessellation of a $3$-torus with $4$ permutahedral tiles arising from this  alternative description of $\tilde{A}_3$, see \cite{delrio}.
\smallskip

\noindent\textbf{Acknowledgements}.
We are grateful to the referee for a careful reading of the original manuscript and various helpful comments.

\end{document}